\setlist[enumerate,1]{label=(\arabic*),font=\normalfont,align=left,leftmargin=0pt,labelindent=0pt,listparindent=\parindent,labelwidth=0pt,itemindent=!,topsep=3pt,parsep=0pt,itemsep=3pt,start=1}
\setlist[enumerate,2]{label=(\alph*),font=\normalfont,labelindent=0pt,leftmargin=1cm,start=1}
\setlist[itemize]{labelindent=*,leftmargin=*,topsep=5pt,itemsep=3pt}
\setlist[description]{labelindent=*,leftmargin=*,itemindent=-1 em}
\renewcommand*\showkeyslabelformat[1]{%
\@ifundefined{hideNextShowKeysLabel}{%
\noexpandarg%
\StrSubstitute{#1}{ }{\textvisiblespace}[\TEMP]%
\parbox[t]{\marginparwidth}{\raggedright\normalfont\small\ttfamily\(\{\){\color{red!50!black}\expandafter\seqsplit\expandafter{\TEMP}}\(\}\)}%
}{}
}
\numberwithin{equation}{section}
\newcommand{\ownthmSpaceAbove}{5pt}
\newcommand{\ownthmSpaceBelow}{5pt}
\newcommand{\resetCurThmBraces}{%
\gdef\curThmBraceOpen{(}%
\gdef\curThmBraceClose{)}}
\declaretheoremstyle[
    spaceabove=\ownthmSpaceAbove,
    spacebelow=\ownthmSpaceBelow,
    headpunct=.,
    postheadspace=.5em,
    notebraces={\curThmBraceOpen}{\curThmBraceClose},
    postheadhook={\resetCurThmBraces},
]{definition}
\declaretheoremstyle[
    style=definition,
    bodyfont=\itshape,
    notebraces={\curThmBraceOpen}{\curThmBraceClose},
    postheadhook={\resetCurThmBraces},
]{theorem}
\declaretheoremstyle[
    style=definition,
    preheadhook=\renewcommand\@upn{},
    headpunct=.,
    headfont=\it,
]{remark}
\declaretheorem[numberwithin=section,style=definition]{definition}
\declaretheorem[style=definition,sibling=definition]{construction}
\declaretheorem[style=definition,sibling=definition]{example}
\declaretheorem[style=definition,sibling=definition]{notation}
\declaretheorem[style=definition,sibling=definition]{assumption}
\declaretheorem[style=theorem,sibling=definition]{theorem}
\declaretheorem[style=theorem,sibling=definition]{corollary}
\declaretheorem[style=theorem,sibling=definition]{lemma}
\declaretheorem[style=theorem,sibling=definition]{proposition}
\declaretheorem[style=definition,sibling=definition]{remark}
\newcommand{\C}{\ensuremath{\mathscr{C}\xspace}}
\newcommand{\D}{\ensuremath{\mathscr{D}\xspace}}
\newcommand{\E}{\ensuremath{\mathscr{E}\xspace}}
\newcommand{\M}{\ensuremath{\mathscr{M}\xspace}}
\newcommand{\N}{\ensuremath{\mathds{N}\xspace}}
\newcommand{\pow}{\ensuremath{\mathcal{P}\xspace}}
\newcommand{\powf}{\ensuremath{\mathcal{P}_{\mathsf{f}}\xspace}}
\newcommand{\powufs}{\ensuremath{\mathcal{P}_{\mathsf{ufs}}\xspace}}
\newcommand{\supp}{\ensuremath{\mathsf{supp}}\xspace}
\newcommand{\A}{\ensuremath{\mathds{A}\xspace}}
\newcommand{\B}{\ensuremath{\mathcal{B}\xspace}}
\newcommand{\Dist}{\ensuremath{\mathcal{D}\xspace}}
\newcommand{\oper}[1]{\ensuremath{\operatorname{\textnormal{\textsf{#1}}}\xspace}}
\newcommand{\id}{\ensuremath{\operatorname{\textsf{id}}\xspace}}
\newcommand{\inl}{\ensuremath{\operatorname{\textsf{inl}}\xspace}}
\newcommand{\inr}{\ensuremath{\operatorname{\textsf{inr}}\xspace}}
\newcommand{\inj}{\ensuremath{\operatorname{\textsf{in}}\xspace}}
\newcommand{\Path}{\ensuremath{\mathscr{P}}}
\newcommand{\Model}{\ensuremath{\mathscr{C}}}
\newcommand{\Set}{\ensuremath{\mathsf{Set}}\xspace}
\newcommand{\Nom}{\ensuremath{\mathsf{Nom}}}
\newcommand{\pure}{\ensuremath{\mathsf{p}}}
\newcommand{\Kl}{\ensuremath{\mathcal{K}\!\ell}}
\newcommand{\Rel}{\ensuremath{\mathsf{Rel}}}
\newcommand{\Coalg}{\oper{Coalg}\xspace}
\newcommand{\arrow}{\rightarrow}
\newsavebox{\mypullbackcorner}%
\sbox{\mypullbackcorner}{%
\begin{tikzpicture}
    \draw[-] (0,0) -- (.5em,.5em) -- (0,1em);
\end{tikzpicture}%
}
\newcommand{\pullbackangle}[2][]{\arrow[phantom,to path={
                     -- ($ (\tikztostart)!1cm!#2:([xshift=8cm]\tikztostart) $)
                        node[anchor=west,pos=0.0,rotate=#2,
                        inner xsep = 0]
                        {\begin{tikzpicture}[minimum
                        height=1mm,baseline=0,#1]
    \draw[-] (0,0) -- (.5em,.5em) -- (0,1em);
                        \end{tikzpicture}}}]{}}
\tikzstyle{shiftarr}=[
\newcommand{\takeout}[1]{\empty}
\newcommand{\ext}[1]{#1^\ast}
\newcommand{\sym}{\ensuremath{\mathfrak{S}}\xspace}
\newcommand{\perms}{\ensuremath{\sym_\mathsf{f}}\xspace}
\renewcommand{\S}{\mathds{S}}
\newcommand{\kth}{\ensuremath{k^{\text{th}}}\xspace}
\newcommand{\sub}{\mathop{\mathsf{Sub}}}
\newcommand{\set}[1]{\{#1\}}
\newcommand{\hookto}{\hookrightarrow}
\newcommand{\subto}{\hookto}
\newcommand{\epito}{\twoheadrightarrow}
\newcommand{\monoto}{\rightarrowtail}
\newsavebox{\kleisliarrow}
\savebox{\kleisliarrow}{%
\begin{tikzpicture}[
      baseline=(arrow.base),
      inner sep=8mm,
      outer sep=0mm,
      ]
      \node[draw=none,
      anchor=base,
      inner sep=0,
      outer sep=0,
      ] (arrow) {$\longrightarrow$};
    \draw[fill=white] ($ (arrow.south) !.68! (arrow.north)$) circle (0.15em);
  \end{tikzpicture}}
\newcommand{\kleislito}{\ensuremath{\mathbin{\usebox{\kleisliarrow}}}}
\newsavebox{\kleislidot}
\savebox{\kleislidot}{%
\begin{tikzpicture}[baseline=0pt,outer sep=0pt]
    \draw[fill=white] (0,0) circle (2pt);
  \end{tikzpicture}}
\tikzstyle{kleisli}=[
\newcommand{\pasttime}{{-}\hspace{1pt}\llap{$\bigcirc$}}
\newcommand{\nexttime}{\bigcirc}
\title{A Coalgebraic View on Reachability}
\author{
  Thorsten Wißmann\footnote{Email:
    \texttt{\{thorsten.wissmann,stefan.milius\}@fau.de} Address:
    Friedrich-Alexander-Universit\"at Erlangen-N\"urnberg, Germany.
    The authors were supported by the DFG project MI~717/5-1. The first author expresses his
    gratitude for having been invited to Tokyo, which initiated the present work.
  },
  Stefan Milius\rlap{,}${}^*$\\
  Shin-ya Katsumata\rlap{,}${}^\dagger$ and
  Jérémy Dubut\footnote{Email: \texttt{\{dubut,s-katsumata\}@nii.ac.jp}
    Address: National Institute of Informatics, 2-1-2 Hitotsubashi, Chiyoda-ku,
    Tokyo, 101-8430, Japan.
    The authors were supported by ERATO HASUO Metamathematics for Systems Design Project (No. JPMJER1603), JST
  }${}^{\ ,}$\footnote{Japanese-French Laboratory for Informatics, Tokyo, Japan}
  %
}
\begin{document}
\maketitle
\centerline{\itshape To the memory of V\v{e}ra Trnkov\'a}

\begin{abstract}%
  \takeout{
  Breadth-first search is a standard algorithm to traverse all nodes in a graph
  structure from a given initial node. Consequently, only the reachable nodes
  are traversed, so it can be used to compute the reachable part of a graph. We
  formulate breadth-first search as a simple categorical construction on a
  pointed coalgebras for an endofunctor $F$. We prove that the result is a
  reachable coalgebra in an abstract sense i.e.~that it does not have a proper
  subcoalgebra. If the functor $F$ preserves preimages, then the
  construction yields a coreflection of a given coalgebra into the
  category of reachable coalgebras.
}
\noindent
Coalgebras for an endofunctor provide a category-theoretic framework
for modeling a wide range of state-based systems of various types. We
provide an iterative construction of the reachable part of a given
pointed coalgebra that is inspired by and resembles the standard
breadth-first search procedure to compute the reachable part of a
graph. We also study coalgebras in Kleisli categories: for a functor
extending a functor on the base category, we show that the reachable
part of a given pointed coalgebra can be computed in that base
category.
\end{abstract}

\section{Introduction}

Coalgebras provide a convenient category theoretic framework in which
to model state-based systems and automata whose transition type is
described by an endo\-functor. For example, classical deterministic and
non-deterministic automata, labelled transition systems as well as
their weighted and probabilistic variants arise as instances of
coalgebras. 

A key notion in the theories of state-based systems of various types
is reachability, i.e.~the construction of a subsystem of a given
system containing precisely those states that can be reached from (a
set of) initial states along a path in the transition graph of the
system. For example, in automata theory, computing the reachable part
of a given deterministic automaton is the first step in every
minimization procedure. It is well-known that reachability has a
simple formulation on the level of coalgebras. In fact, a pointed
coalgebra, i.e.~one with a given initial state, is called reachable if
it does not contain any proper subcoalgebra containing the initial
state~\cite{amms13}. Moreover, for a functor preserving intersections,
the reachable part of a given pointed coalgebra is obtained by taking
the intersection of all the subcoalgebras containing the initial
state. The purpose of the present paper is a more thorough study of
reachable coalgebras and, in particular, a new iterative construction
of the reachable part of a given pointed coalgebra.

After recalling some preliminaries in \autoref{S:preach}, we discuss
some background material on endofunctors on $\Set$ preserving
intersections in \autoref{S:inter} and on the canonical graph of a
coalgebra in $\Set$ in \autoref{S:cangraphs}.

In \autoref{S:iter} we present a new iterative construction of the reachable
part of a given pointed coalgebra that is inspired by
and closely resembles the standard breadth-first search in graphs. Our
construction works for coalgebras over every well-powered category
$\C$ having coproducts and a factorization system $(\E,\M)$, where
$\M$ consists of monomorphisms. Moreover, the coalgebraic type functor
$F\colon \C \to \C$ is assumed to have \emph{least bounds}, a notion
previously introduced by Block~\cite{Block12}. Extending a result by
Gumm~\cite{gumm05filter} for set functors, we prove in
\autoref{P:leastfact} that a functor has least bounds if and only if
it preserves intersections. Moreover, this is equivalent to the
existence of a left-adjoint to the operator
$\nexttime_f\colon \sub(Y) \to \sub(X)$ for every $f\colon X \to FY$,
which assigns to every subobject $m$ of $Y$ the pullback of $Fm$ along
$f$. Note that, for a coalgebra $c\colon C \to FC$, this operator is
Jacobs' ``next time'' operator~\cite{Jacobs02}. In our iterative
construction of the reachable part we use its left-adjoint
$\pasttime_c$ on $\sub(C)$, which corresponds to the ``previous time''
operator of classical linear temporal logic~\cite{MP92}. In fact, we
consider a coalgebra $c\colon C \to FC$ together with an
\emph{$I$-pointing}, i.e.~a morphism $i_C\colon I \to C$, where $I$ is
some object, and we prove in our main result \autoref{constrCorrect}
that the reachable part of the given $I$-pointed coalgebra is given by
the union of all $\pasttime^k(m_0)$, where $m_0$ is given by
$(\E,\M)$-factorizing the given $I$-pointing $i_C$. Moreover, we prove
that, whenever $F$ preserves inverse images, the reachable part is a
coreflection of $(C,c,i_C)$ into the category of $I$-pointed reachable
$F$-coalgebras (\autoref{thm:coreflect}). We also show that for an
$I$-pointed coalgebra in $\Set$ the above iterative construction of
the reachable part can be performed as a standard breadth-first search
on the canonical graph (\autoref{C:set}).

Finally, we study in \autoref{S:kleisli} coalgebras for a functor
$\bar F$ on a Kleisli category over $\C$, which is an extension of an
endofunctor $F$ on $\C$. Here we show that the reachable part of a given
$I$-pointed $\bar F$-coalgebra can be constructed as the reachable part 
of a related coalgebra in $\C$.

\enlargethispage{10pt}
\paragraph{Dedication.} We would like to dedicate this paper to the memory of
V\v{e}ra Trnkov\'a. Her research, especially her foundational results
of the late 1960s and early 1970s on set functors, are still
continuing to have considerable impact, in particular for the theory of
coalgebras. In addition, her work on Kleisli categories and lifting
functors to categories of relations is of basic importance for work on
coalgebraic logic. We make use of Trnkov\'a's careful research on
properties of set functors in \autoref{S:inter}.

\paragraph{Related work.} Our results are based on the notion of
reachable coalgebras introduced by Ad\'amek et al.~\cite{amms13}. Our
construction of the reachable part appears in work by Wißmann, Dubut,
\text{Katsumata}, and Hasuo~\cite{pathCatFree} (see Lemma~A.5 of the
full version), where it is used as an
auxiliary construction in order to give a
characterization of the reachability of a coalgebras in terms of
paths~\cite[Section~3.5]{pathCatFree}. However, that work does not
connect the construction with the ``previous time'' operator.

The ``previous time'' operator is also studied by Barlocco, Kupke, and
Rot~\cite{BarloccoEA19}. They work with a complete and well-powered
category $\C$, and, like us, they show
that the reachable part of a given pointed coalgebra can be obtained
by an iterative construction using the ``previous time'' operator. Their
results were obtained independently and almost at the same time as
ours.

\section{Pointed and Reachable Coalgebras}
\label{S:preach}

In this section we recall some preliminaries on pointed and reachable
coalgebras for an endofunctor.  A coalgebra for an endofunctor
$F\colon \C \to \C$ (or $F$-coalgebra, for short) is a pair $(C, c)$
where $C$ is an object of $\C$ called the \emph{carrier} of the
coalgebra and $c\colon C \to FC$ a morphism called the
\emph{structure} of the coalgebra. A \emph{coalgebra homomorphism}
$h\colon (C,c) \to (D,d)$ is a morphism $h\colon C \to D$ of $\C$ that
commutes with the structures on $C$ and $D$, i.e.~the following square
commutes:
\[
  \begin{tikzcd}
    C \arrow{r}{c} \arrow{d}[swap]{h} & FC \arrow{d}{Fh} \\
    D \arrow{r}{d} & FD
  \end{tikzcd}
\]
\begin{definition}
  Given an endofunctor $F\colon \C\to\C$ and an object $I$ of $\C$, an
  \emph{$I$-pointed $F$-coalgebra} is a triple $(C,c,i_C)$ where
  $(C,c)$ is an $F$-coalgebra and $i_C\colon I \to C$ a morphism of
  $\C$. A homomorphism of $I$-pointed coalgebras from $(C,c,i_C)$ to
  $(D,d,i_D)$ is a coalgebra homomorphism $h\colon (C,c) \to (D,d)$
  preserving the pointings, i.e.~$h \cdot i_C = i_D$. We denote by
  \[
    \Coalg_I(F)
  \]
  the category of $I$-pointed $F$-coalgebra and their homomorphisms.
\end{definition}
\begin{example}\label{E:coalg}
  Pointed coalgebras allow to capture many kinds of state-based
  systems categorically. We just recall a couple of examples; for further
  examples, see e.g.~\cite{rutten00}. 
  \begin{enumerate}
  \item Deterministic automata are $5$-tuples
    $(S, \Sigma, \delta, s_0, F)$, with a set $S$ of states, an input
    alphabet $\Sigma$, a next-state function
    $\delta\colon S \times \Sigma \to S$, an initial state $s_0 \in S$
    and a set $F \subseteq S$ of final states. Here we fix the input
    alphabet $\Sigma$. Representing the subset $F$ by its
    characteristic function $f\colon S \to \{0,1\}$, and currying $\delta$
    we see that a deterministic automaton is, equivalently, a pointed
    coalgebra for $FX = \{0,1\} \times X^\Sigma$ on $\Set$ with the pointing
    $s_0\colon 1 \to S$ given by the initial state.

  \item\label{E:coalg:2} Non-deterministic automata are similar to deterministic ones,
    except that in lieu of a next-state function one has a next-state
    relation $\delta \subseteq S \times \Sigma \times S$ and a set of
    initial states $I \subseteq S$. These data can be represented as
    two functions $i\colon 1 \to \pow S$ and
    $c\colon S \to \pow(1 + \Sigma \times S)$, where $\pow$ denotes
    the power-set. That means that a non-deterministic automaton is,
    equivalently, a coalgebra for the functor
    $FX = 1 + \Sigma \times X$ on the Kleisli category of the monad $\pow$,
    i.e.~the category $\Rel$ of sets and relations.

  \item Pointed graphs are, equivalently, coalgebras for the power-set functor
    $\pow\colon \Set \to\Set$. Indeed, a pointed coalgebra
    \[
      1 \xrightarrow{v_0} V \xrightarrow{e} \pow V
    \]
    consists of a set of vertices $V$ with directed edges given by a binary
    relation, represented by $e$, and a distinguished node
    $v_0 \in S$.
  \item The category of nominal sets provides a framework where
    freshness of names or resources in systems can be modelled or
    where systems can store values from infinite data types. We
    briefly recall the definition of the category $\Nom$ of nominal
    sets (see e.g.~Pitts~\cite{Pitts13}). We fix a countably infinite
    set $\A$ of \emph{atomic names}. Let $\perms(\A)$ denote the group
    of all finite permutations on $\A$ (which is generated by all
    transpositions $(a\,b)$ for $a,b \in \A$). Let $X$ be a set with
    an action of this group, denoted by $\pi \cdot x$ for a finite
    permutation $\pi$ and $x \in X$. A subset $A \subseteq \A$ is
    called a \emph{support} of an element $x \in X$ provided that
    every permutation $\pi \in \perms(\A)$ that fixes all elements of
    $A$ also fixes $x$:
    \[
      \forall \pi\in \perms(\A)\colon \text{$\pi(a) = a$ for all $a \in A \implies \pi \cdot x = x$}. 
    \]
    A \emph{nominal set} is a set with an action of the group
    $\perms(\A)$ such that every element has a finite support. The
    category $\Nom$ is formed by nominal sets and \emph{equivariant
      maps}, i.e.~maps preserving the given group action.  Each
    nominal set $X$ is thus equipped with an equivariant map
    $\supp\colon X\to \powf(\A)$ that assigns to each element its least
    support. For example, the set of terms of the $\lambda$-calculus
    modulo renaming of bound variables is a nominal set, where
    the least support of a $\lambda$-term is the set of its free
    variables. Variable binding can be modelled by the \emph{binding}
    functor on $\Nom$. This functor maps a nominal set $X$ to the
    nominal set $[\A](X) = (\A \times X)/\mathord{\sim}$ where
    $(a,x)\sim(b,y)$ iff $(c\,a)\cdot x=(c\,b)\cdot y$ for any \emph{fresh}
    $c$, i.e.  $c \not\in \supp(x) \cap \supp(y)$. That means that
    $\sim$ abstracts $\alpha$-equivalence known from calculi with name
    binding such as the $\lambda$-calculus. In fact, the set of
    $\lambda$-expressions modulo $\alpha$-equivalence is the initial
    algebra for the endofunctor $FX = \A + X\times X + [\A]X$ on
    $\Nom$~\cite{gp99}.
    %
    %
    Coalgebras for functors on $\Nom$ have been studied
    e.g.~in~\cite{KurzEA13,mw15,msw16}.
    
    There are a number of different notions of automata featuring a
    nominal set of states and which process words over the infinite
    input alphabet $\A$. One example of a coalgebraic notion of
    automata are regular nondeterministic nominal automata
    (RNNA)~\cite{skmw17}; they are precisely the coalgebras for the
    functor on nominal sets given by
    \[
      FX = 2\times \powufs(\A\times X) \times \powufs([\A]X),
    \]
    where $\powufs$ is a variant of the finite power-set functor on
    $\Nom$ -- it maps a nominal set $X$ to the nominal set of all of
    its \emph{uniformly supported} subsets $S$, i.e.~$S$ is an
    equivariant subset of $X$ such that $\bigcup_{x \in S} \supp(x)$
    is finite.

    Intuitively, in a coalgebra $C \to 2\times \powufs(\A\times C) \times
    \powufs([\A]C)$, $2$ marks whether a state is final; $\powufs([\A]C)$ is the
    set of \emph{binding transitions} from the state $x$, i.e.~where the input
    letter is stored for later use; and $\powufs(\A\times C)$ is the set of
    transitions that compare the input letter to a previously stored one.
    Let $\A^{\# n}$ be the nominal set of $n$-tuples with distinct components,
    i.e.
    \[
      \A^{\# n} = \set{(a_1, \ldots, a_n) \in \A^n \mid |\set{a_1, \ldots
          a_n}| = n}.
    \]
    \sloppypar\noindent
    Then $\A^{\# n}$-pointed RNNAs accept nominal languages,
    i.e.~finitely supported maps \mbox{$L\colon \A^*\to 2$}, whose support has a cardinality
    of at most $n$~\cite[Corollary 5.5]{skmw17}, under both language semantics
    considered in~\emph{op.~cit}. Note that it is important not to
    restrict $I$ to be the terminal object; in fact, that would
    restrict initial objects to have empty support, which may not be
    desirable in applications. 

  \item An alternative approach to bisimulation of transition systems via so
    called \emph{open maps} was introduced by Joyal, Nielsen, and
    Winskel~\cite{joyal96}. There, one considers functors of type $J\colon \Path
    \to \Model$\smnote{I think we should change $\Model$ to $\C$
      because we use $\M$ for a class from a factorization system in
      the rest of the paper. TW: I've changed the macro definition and I think
      it's fine.} from a small category $\Path$ of
    ``paths'' or ``linear systems'' to the category $\Model$ of ``all
    systems'' under consideration. This functor $J$ defines a notion of
    \emph{open map}. We do not recall the definition as it is irrelevant here;
    for details and the definition of open map see \emph{op.~cit}. The objects
    in $\Model$ are usually defined as systems with an initial state, and
    morphisms in $\Model$ are maps between systems that preserve (but not
    necessarily reflect) outgoing transitions of states, whereas the open maps
    in $\Model$ are morphisms that do reflect the outgoing transitions of states
    that are in the reachable part of the system. Let $|\Path |$ denote the set
    of objects of $\Path $. It was shown by Lasota~\cite{lasota02} that the
    canonical functor $\Model(J(-),(-))\colon \Model\to \Set^{|\Path |}$ sends
    open maps in $\Model$ to $F$-coalgebra homomorphisms for the following
    functor
    \[
      F\colon \Set^{|\Path |}
      \to  \Set^{|\Path |}
      \quad\text{given by}\quad
      (X_P)_{P\in\Path }
      \mapsto
      \big(
      \prod_{\mathclap{Q\in |\Path |}} \pow(X_Q)^{\Path (P,Q)}
      \big)_{P\in \Path}\,.
    \]
    If $\Path $ has an initial object $0_\Path $ that is preserved by $J$, then
    the subcategory of $\Model$ formed by all open maps embeds into the
    category of $I$-pointed $F$-coalgebras~\cite{pathCatFree}, where
    \[
      I \in \Set^{|\Path |}\qquad\text{with}\qquad
      I_P = \begin{cases}
        1 &\text{if }P = 0_{\Path } \\
        \emptyset &\text{otherwise.} \\
      \end{cases}
    \]
    Note that once again $I$ is not the terminal object of $\Set^{|\Path |}$.
  \end{enumerate}
\end{example}

Our overall setting is that of a
category $\C$ equipped with a \emph{factorization system} $(\E, \M)$,
i.e.~(1)~$\E$ and $\M$ are classes of morphisms of $\C$ that are closed
under composition with isomorphisms, (2)~every morphism $f$ of $\C$
has a factorization $f = m \cdot e$ with $m \in \M$ and $e \in \E$,
and~(3)~the following \emph{unique diagonal fill-in} property holds:
for every commutative square
\[
  \begin{tikzcd}
    A \arrow{r}{e} \arrow{d}[swap]{f} & B \arrow{d}{g} \arrow[dashed,swap]{dl}{d}\\
    C \arrow{r}{m} & D
  \end{tikzcd}
\]
with $e \in \E$ and $m \in \M$ there exists a unique morphism
$d\colon B \to C$ such that $m \cdot d = g$ and $d \cdot e = f$. We
will denote morphisms in $\M$ by $\monoto$ and those in $\E$ by
$\epito$. While not necessary, in typical examples $\E$ is a class of
epimorphisms, and $\M$ is a class of monomorphisms: (regular epi,
mono) in regular categories, (epi, strong mono) in quasitoposes, (epi,
mono) in toposes, etc.

We shall later only assume that $\M$ is a class of monomorphisms.  Whenever
we speak of a \emph{subobject} of some object $X$ we mean one that is
represented by a morphism $m\colon S \monoto X$ in $\M$. Moreover, we
shall speak of ``the subobject $m$'', i.e.~we use representatives to refer
to subobjects. The subobjects of an object $X$ form a partially
ordered class
\[
  \sub(X)
\]
in the usual way: for $m\colon S \monoto X$ and $m'\colon S'\monoto X$
we have $m \leq m'$ if there exists $i\colon S\to S'$ with
$m' \cdot i = m$.

\begin{remark} \label{rem:EM}
  $(\E,\M)$-factorization systems have many properties known from surjective
  and injective maps on \Set
  (see~\cite[Chapter~14]{ahs09}):\smnote{pointing to a different
    Proposition in the same chapter of the same source looks a bit
    funny; we shouldn't underestimate our readers.}
  \begin{enumerate}
  \item $\E\cap \M$ is the class of isomorphisms of $\C$.
  \item $\M$ is stable under pullbacks.
  \item\label{rem:EM:3} If $f\cdot g\in \M$ and $f\in \M$, then $g\in \M$. 
  \item $\E$ and $\M$ are closed under composition.
  \end{enumerate}
\end{remark}

\begin{remark}\label{rem:subEM}
  \begin{enumerate}
  \item\sloppypar \emph{Subcoalgebras} of pointed coalgebras are understood to
    be formed w.r.t.~the class $\M$, i.e.~a subcoalgebra is
    represented by a homomorphism \linebreak
    $m\colon (S,s,i_S) \monoto (C,c,i_C)$ with $m \in \M$. Similarly, a
    \emph{quotient coalgebra} is represented by a 
    homomorphism $q\colon (C, c, i_C) \epito (Q, q, i_Q)$ with $q
    \in \E$.
  \item\label{rem:subEM:2} Suppose that $F\colon \C \to \C$ preserves
    $\M$-morphisms, i.e.~$Fm \in \M$ for every $m \in \M$. Then the
    factorization system $(\E, \M)$ lifts to $\Coalg_I(F)$ as follows. For every
    homomorphism $h\colon (C,c,i_C) \to (D,d,i_D)$ one takes its
    factorization $h = m \cdot e$ in $\C$ and then obtains a unique coalgebra
    structure such that $e$ and $m$ are homomorphisms of $I$-pointed
    coalgebras using the unique diagonal fill-in property:

    \[
      \begin{tikzcd}
        &
        C \arrow{r}{c} \arrow[->>]{d}[swap]{e} & FC \arrow{d}{Fe} \\
        I
        \arrow{ur}{i_C}
        \arrow{dr}[swap]{i_D}
        \arrow{r}{e\cdot i_C}
        & X \arrow[dashed]{r} \arrow[>->]{d}[swap]{m} & FX \arrow[>->]{d}{Fm} \\
        &
        D \arrow{r}{d} & FD
      \end{tikzcd}
    \]
  \end{enumerate}
\end{remark}

\begin{definition}[Reachable coalgebra~\cite{amms13}]
  An $I$-pointed coalgebra $(C,c,i_0)$ is called \emph{reachable} if
  it has no proper pointed subcoalgebra, i.e.~every homomorphism
  $m\colon (C', c', i_{C'}) \monoto (C,c,i_C)$ of
  $I$-pointed coalgebras with $m \in \M$ is an isomorphism.
\end{definition}
\begin{remark}\label{R:triv}
  When $I = 0$ is the initial object and $\M$ is a class of
  monomorphisms, then a coalgebra is reachable if and only if it is a
  quotient coalgebra of $(0, u, \id_0)$ where $u\colon 0 \to F0$ is
  the unique morphism. Indeed, suppose that $(C,c, i_C)$ is reachable, let
  $h\colon (0,u,\id_0) \to (C,c,i_C)$ be the unique 
  homomorphism, and take the $(\E,\M)$-factorization $h = m\cdot
  e$. Then $m$ represents an $I$-pointed subcoalgebra of $(C,c,i_C)$ and
  thus is an isomorphism.

  \sloppypar
  Conversely, if $e\colon (0,u,\id_0) \epito (C,c,i_C)$ is a quotient
  coalgebra and $m\colon (S,s,i_S) \monoto (C,c,i_C)$ is any
  subcoalgebra then $e = m \cdot h$ where $h\colon 0 \to S$ is the
  unique morphism. By the unique diagonalization property, we obtain
  $d\colon C \to S$ such that $m \cdot d = \id_C$. Thus $m$ is a split
  epimorphism and a monomorphism, whence an isomorphism. Consequently,
  $(C,c, i_C)$ is reachable.

  Finally, it follows that if the unique morphisms $0 \to X$ are in $\M$ for
  every object $X$, then $(0, u, \id_0)$ is the only reachable
  $0$-pointed coalgebra.
\end{remark}
\begin{example}\label{E:reach}
  \begin{enumerate}
  \item\label{E:reach:1} For a pointed graph, reachability is clearly
    the usual graph theoretic concept: a pointed coalgebra $(V,a,v_0)$
    for $\pow$ is reachable if and only if every of its nodes can be
    reached by a directed path from the distinguished node~$v_0$.
  \item\label{E:reach:2} A deterministic automaton regarded as a
    pointed coalgebra for $FX = \{0,1\} \times X^\Sigma$ on $\Set$ is
    reachable if and only if every of its states is reachable in
    finitely many steps from its initial state. This is not difficult
    to see directly, but it follows immediately from
    \autoref{T:reach}.
  \end{enumerate}
\end{example}

\section{Functors preserving intersections }
\label{S:inter}
We shall see in \autoref{S:iter} that the central assumption for our
constructions of the reachable part of a pointed $F$-coalgebra is
equivalent to the functor $F$ preserving intersections. For set
functors we discuss this condition in the present section. Indeed, it
is an extremely mild condition satisfied by many set functors of
interest:

\begin{example}
  The collection of set functors which preserve intersections is
  closed under products, coproducts, and composition.
  Consequently, every polynomial endofunctor on $\Set$ preserves 
  intersections. Moreover it is easy to see that the power set functor
  $\pow$, the bag functor $\B$ mapping every set $X$ to the set of
  finite multisets on $X$, as well as the functor $\D$ mapping $X$ to
  the set of (countably supported) probability measures on $X$ preserve
  intersections.
\end{example}

Among the finitary set functors ``essentially'' all functors preserve
intersections. This follows from the results of Trnkov\'a on set
functors as we shall now explain. First, recall that a functor is
called \emph{finitary}, if it preserves filtered colimits. For a set functor $F$ this
is equivalent to being \emph{finitely
  bounded}~\cite[Corollary~3.13]{amsw19}, which is the following
condition: for every element $x \in FX$ there exists a finite subset
$M \subseteq X$ such that $x \in Fi[FM]$, where $i\colon M \subto X$
is the inclusion map.

Secondly, as shown by Trnkov\'a~\cite{trnkova69}, every set functor
preserves finite non-empty intersections. Moreover, she proved that
one can turn every set functor into one that preserves all finite
intersections by a simple modification at the empty set:

\begin{proposition}[Trnkov\'a~\cite{trnkova71}]\label{P:Tr}
  For every set functor $F$ there exists an essentially unique set
  functor $\bar F$ which coincides with $F$ on non-empty sets and
  functions and preserves finite intersections (whence monomorphisms).
\end{proposition}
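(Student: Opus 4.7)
The plan is to modify $F$ only at $\emptyset$ while leaving its action on non-empty sets and functions between them unchanged. Writing $c_0, c_1 \colon 1 \to 2$ for the two points of $2 = \set{0,1}$, any set functor preserving finite intersections must send the pullback square
\[
  \begin{tikzcd}
    \emptyset \arrow{r} \arrow{d} & 1 \arrow{d}{c_1} \\
    1 \arrow{r}{c_0} & 2
  \end{tikzcd}
\]
to a pullback in $\Set$, forcing the value at $\emptyset$ to be (isomorphic to) the equalizer of $Fc_0$ and $Fc_1$. I therefore define
\[
  \bar F\emptyset \;=\; \set{y \in F1 \mid Fc_0(y) = Fc_1(y)},
\]
with inclusion $e \colon \bar F\emptyset \hookrightarrow F1$. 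I set $\bar FX = FX$ and $\bar Ff = Ff$ whenever $X$ and the domain of $f$ are non-empty, and for the unique map $!_X \colon \emptyset \to X$ with $X$ non-empty, I define $\bar F(!_X) = Fj \cdot e$, where $j \colon 1 \to X$ picks an arbitrary element.

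Independence of the choice of $j$ follows from a short diagram chase: for any two points $j, j' \colon 1 \to X$, pick $k \colon 2 \to X$ with $kc_0 = j$ and $kc_1 = j'$; since elements of $\bar F\emptyset$ satisfy $Fc_0 = Fc_1$, applying $Fk$ yields $Fj = Fj'$ on $\bar F\emptyset$. Functoriality then reduces to the identity $Ff \cdot Fj = F(fj)$ for $f$ between non-empty sets. Essential uniqueness is automatic: any competitor $\bar F'$ agreeing with $F$ on non-empty inputs and preserving finite intersections must send the pullback above to a pullback, identifying $\bar F'\emptyset$ canonically with our equalizer, and the same point-picking argument pins down its action on every $!_X$.

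The main obstacle is preservation of all finite intersections. Trnkov\'a's earlier result already handles non-empty intersections, so only binary intersections equalling $\emptyset$ remain. For monomorphisms $m_A \colon A \hookrightarrow X$ and $m_B \colon B \hookrightarrow X$ with $A \cap B = \emptyset$, the case where $A$ or $B$ is empty reduces to showing that $\bar F(!_X)$ is a monomorphism; this holds because $Fj$ is a split monomorphism (with retraction $F(!_X)$, coming from $!_X \cdot j = \id_1$) and $e$ is an equalizer. When $A, B$ are both non-empty, I construct a classifying map $k \colon X \to 2$ with $km_A = c_0 \cdot {!_A}$ and $km_B = c_1 \cdot {!_B}$; applying $Fk$ shows that any pullback element $(\alpha,\beta) \in FA \times_{FX} FB$ yields the common element $y := F(!_A)(\alpha) = F(!_B)(\beta)$ satisfying $Fc_0(y) = Fc_1(y)$, placing $y$ in $\bar F\emptyset$. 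The delicate step is verifying that this assignment, together with the retractions $(Fj_A, Fj_B)$ in the reverse direction, exhibits $\bar F\emptyset$ as the pullback in $\Set$ naturally in $A, B, X$; this uses the splittings $!_A \cdot j_A = \id_1 = !_B \cdot j_B$ and Trnkov\'a's non-empty intersection theorem in an essential way.
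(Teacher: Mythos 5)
Your route is Barr's equalizer construction of the Trnkov\'a hull, which is precisely the second construction the paper sketches in Remark~\ref{R:inter}; the paper itself gives no proof of Proposition~\ref{P:Tr} but defers to Trnkov\'a's papers and to Ad\'amek--Trnkov\'a, so you are fleshing out a known outline rather than diverging from the paper. The definition of $\bar F\emptyset$ as the equalizer of $Fc_0,Fc_1$, the independence of $\bar F({!_X})=Fj\cdot e$ from the chosen point $j$, functoriality, essential uniqueness, the reduction to intersections with value $\emptyset$ via Trnkov\'a's non-empty-intersection theorem, and the easy cases (an empty leg; injectivity of $\bar F({!_X})$ because $Fj$ is split monic and $e$ is an equalizer) are all correct, modulo the notational slip of writing $!_A$ both for $\emptyset\to A$ and for $A\to 1$.

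The gap is in the one case that carries all the difficulty: $A,B$ non-empty and disjoint in $X$. You correctly produce the candidate $y:=Ft_A(\alpha)=Ft_B(\beta)\in F1$ (with $t_A\colon A\to 1$, etc.) and show $Fc_0(y)=Fc_1(y)$ via the classifying map $k$, so $y\in\bar F\emptyset$. But the pullback property also requires $Fj_A(y)=\alpha$ and $Fj_B(y)=\beta$, and this does \emph{not} follow from the splittings alone: $Fj_A(Ft_A(\alpha))=F(j_At_A)(\alpha)$, and $j_At_A$ is a constant endomap of $A$, not the identity. You flag this as ``the delicate step'' and name the tools, but the actual idea is missing: one must apply Trnkov\'a's non-empty-intersection theorem to an \emph{auxiliary} intersection such as $A\cap(B\cup\set{p})=\set{p}$, where $p=j_A(\ast)\in A$. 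Since $Fm_A(\alpha)=Fm_B(\beta)$ lies in the images of both $Fm_A$ and $Fm_{B\cup\set{p}}$, preservation of that non-empty intersection puts it in the image of $F(\set{p}\subto X)$, and injectivity of $Fm_A$ (a split mono) then gives $\alpha=Fj_A(\delta)$ for some $\delta\in F1$; only now does the splitting $t_Aj_A=\id_1$ yield $y=Ft_A(Fj_A(\delta))=\delta$, hence $Fj_A(y)=\alpha$, and symmetrically for $B$. Without exhibiting this auxiliary intersection the central verification is not closed, so as written the proof is incomplete at its crux, even though the chosen approach is sound and does go through.
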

For the proof see~\cite[Propositions III.5 and II.4]{trnkova71}; for a
more direct proof see Ad\'amek and Trnkov\'a~\cite[III.4.5]{AT90}.
We call the functor $\bar F$ the \emph{Trnkov\'a hull} of $F$.

\begin{remark}\label{R:inter}
  \begin{enumerate}
  \item In fact, Trnkov\'a gave a construction of $\bar F$: she
    defined $\bar F \emptyset$ as the set of all natural
    transformations $C_{01} \to F$, where $C_{01}$ is the set functor
    with $C_{01} \emptyset = \emptyset$ and $C_{01} X =1$ for all
    non-empty sets $X$, and $\bar F e$, for the empty map
    $e\colon \emptyset \to X$ with $X\neq \emptyset$, maps a natural
    transformation $\tau\colon C_{01} \to F$ to the element given by
    $\tau_X\colon 1\to FX$.
  \item There is also a different construction of $\bar F$ due to
    Barr~\cite{Barr93}: consider the two functions
    $t, f\colon 1 \subto 2$. Their intersection is the empty function
    $e\colon \emptyset \to 1$. Since $\bar F$ must preserve this
    intersection it follows that $\bar Fe$ is monic and forms (not
    only a pullback but also) an equalizer of $\bar F t = Ft$ and
    $\bar F f = Ff$. Thus $\bar F$ must be defined on $\emptyset$ (and
    e) as the equalizer
    \[
      \begin{tikzcd}
        \bar F\emptyset
        \arrow{r}{\bar F e}
        &
        \bar F1
        =
        F1
        \arrow[shift left=1]{r}{Ft}
        \arrow[shift right=1]{r}[swap]{Ff}
        & F2,
      \end{tikzcd}
    \]
    and on all non-empty functions $f$, one defines $\bar F f = Ff$.

  \item Trnkov\'a proved that $\bar F$ defines a set functor
    preserving finite intersections.  From the proof in
    \emph{op.~cit.} it also follows that if $F$ is finitary, so if
    $\bar F$. 
  \item Furthermore, $\bar F$ is a reflection of $F$ into the full
    subcategory of the category of all endofunctors on $\Set$ given by
    those endofunctors preserving finite intersections. That means
    there is a natural transformation $r\colon F \to \bar F$ such that for
    every natural transformation $s\colon F \to G$ where $G:\Set \to \Set$
    preserves finite intersections there exists a unique natural
    transformation $s^\sharp\colon \bar F \to G$ such that
    $s^\sharp \cdot r = s$ (see~\cite[Corollary~VII.2]{ablm} for
    details).
  \item Finally, note that the categories of coalgebras for $F$ and
    its Trnkov\'a hull $\bar F$ are clearly isomorphic.
  \end{enumerate}
\end{remark}

For the following fact, see e.g.~Ad\'amek et al.~\cite[Proof of
Lemma~8.8]{amm18}; we include the proof for the convenience of the
reader.
\begin{corollary}\label{cor:fin}
  The Trnkov\'a hull of a finitary set functor preserves all
  intersections.
\end{corollary}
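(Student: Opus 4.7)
The plan is to use finitariness of $\bar F$ to reduce an arbitrary intersection to a finite one, and then invoke the fact that $\bar F$ preserves finite intersections (\autoref{P:Tr}) together with finite-intersection preservation on the image of the finite witness set.

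Concretely, let $(m_j\colon A_j \subto X)_{j\in J}$ be an arbitrary family of subobjects of a set $X$ with intersection $m\colon A\subto X$, where $A = \bigcap_{j\in J} A_j$. Because $\bar F$ preserves finite intersections it preserves monos, so each $\bar F m_j$ and $\bar F m$ are monos, and since $m$ factors through every $m_j$ we immediately get one inclusion $\bar F A \subseteq \bigcap_{j\in J} \bar F A_j$ inside $\bar F X$. The work is the reverse inclusion: given $x \in \bar F X$ that lies in $\bar F m_j[\bar F A_j]$ for every $j\in J$, to show $x \in \bar F m[\bar F A]$.

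Since $F$ (and hence $\bar F$, by \autoref{R:inter}) is finitary, it is finitely bounded: there exists a finite subset $i\colon M \subto X$ and some $y \in \bar F M$ with $\bar F i(y) = x$. For each $j$, form the pullback
\[
  \begin{tikzcd}
    M \cap A_j \arrow[>->]{r} \arrow[>->]{d} & M \arrow[>->]{d}{i} \\
    A_j \arrow[>->]{r}{m_j} & X
  \end{tikzcd}
\]
which is the intersection of two subobjects of $X$ and hence preserved by $\bar F$. The assumption $x \in \bar F m_j[\bar F A_j]$ together with $\bar F i(y) = x$ then yields, by the pullback property applied to $\bar F$ of this square, an element of $\bar F(M \cap A_j)$ whose image in $\bar F M$ is $y$. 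In other words, $y$ lies in $\bar F(M \cap A_j)$ viewed as a subobject of $\bar F M$, for every $j \in J$.

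Now comes the decisive step: because $M$ is finite, the family $\{M \cap A_j\}_{j\in J}$ has only finitely many distinct members among the finite subsets of $M$, so
\[
  \bigcap_{j\in J}(M \cap A_j) \;=\; M \cap \bigcap_{j\in J} A_j \;=\; M \cap A
\]
is actually a \emph{finite} intersection. Applying $\bar F$ and using \autoref{P:Tr} (preservation of finite intersections) gives $\bigcap_{j\in J} \bar F(M \cap A_j) = \bar F(M \cap A)$ as subobjects of $\bar F M$, so $y \in \bar F(M \cap A)$, and therefore $x = \bar F i(y) \in \bar F A$ as required.

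The only real subtlety I expect is making sure the empty-set case causes no trouble — for instance, $M$ might be empty, or some $M \cap A_j$ might be empty — but since $\bar F$ preserves \emph{all} finite intersections (including the degenerate ones involving $\emptyset$), the pullback argument and the concluding finite-intersection identity remain valid without case distinction.
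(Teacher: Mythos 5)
Your proof is correct and rests on the same two pillars as the paper's: finitariness supplies a finite witness set $M$ through which $x$ factors, and preservation of finite intersections lets you intersect $M$ with each $A_j$. The only genuine difference is in how you finish. The paper first upgrades the finite witness to a \emph{least} one $m$ and then derives $m \subseteq v_i$ from minimality, so that $m \subseteq \bigcap_i v_i$ immediately; you keep an arbitrary witness $M$ and instead observe that $\{M \cap A_j\}_{j\in J}$ has only finitely many distinct members, so the would-be infinite intersection collapses to a finite one inside $\bar F M$, which $\bar F$ preserves. Your variant is marginally more self-contained, since it never has to justify the existence of the least bound (which itself needs a small argument); the paper's is slightly slicker once that least bound is in hand. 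Your pullback step placing $y$ in $\bar F(M\cap A_j)$ is exactly the paper's implicit use of preservation of the binary intersection $v_i\cap m$, and your closing remark is right: because the Trnkov\'a hull preserves \emph{all} finite intersections, including the degenerate ones at $\emptyset$, no case distinction is needed.
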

%
\begin{proof}
  Let $F$ be a finitary set functor. Since $\bar F$ is finitary and
  preserves finite intersections, for every element $x \in \bar F X$,
  there exists a \emph{least} finite set $m\colon Y \subto X$ with $x$
  contained in $\bar F m$. Preservation of all intersections now
  follows easily: given subsets $v_i\colon V_i \subto X$, $i \in I$, with
  $x$ contained in the image of $\bar F v_i$ for each $i$, then $x$
  also lies in the image of the finite set $v_i \cap m$, hence
  $m \subseteq v_i$ by minimality. This proves
  $m \subseteq\bigcap_{i\in I} v_i$, thus, $x$ lies in the image of
  $\bar F(\bigcap_{i\in I} v_i)$, as required.
\end{proof}
\begin{remark}
  Note that the argument in \autoref{cor:fin} can be generalized to
  locally finitely presentable categories, see e.g.~Ad\'amek and
  Rosick\'y~\cite{AR94} for the definition.  In fact, let $\C$ be a
  locally finitely presentable category in which additionally every finitely
  generated object only has a finite number of subobjects; for
  example, $\Set$ or the categories of nominal sets (see
  \autoref{E:ass}\ref{E:ass:4}), of posets, and of graphs.

  Then every finitary endofunctor $F$ on $\C$ preserving finite
  intersections preserves all intersections. Indeed, since $F$ is
  finitary, for every monomorphism $m\colon Y \monoto FX$ with $Y$
  finitely generated there exists a subobject $z\colon Z \monoto X$
  with $Z$ finitely generated such that $m$ factorizes through $Fz$,
  i.e.~there exists some $g\colon Y \to FZ$ such that $Fm \cdot g = f$
  (see e.g.~\cite{amsw19}). Since $F$ preserves finite intersections
  it follows that there is a \emph{least} subobject
  $z\colon Z \monoto X$ such that $m$ factorizes through $Fz$. Indeed,
  let $z$ be the intersection of all subobjects $z'\colon Z \monoto X$
  such that $m$ factorizes through $Fz'$. This intersection is equal
  to the one of all $z'\cap z$, which is a finite intersection by our
  hypothesis since $Y$ is a finitely generated object. Since $F$
  preserves the latter finite intersection we obtain a morphism
  $g\colon Y \to FZ$ such that $Fz \cdot g = m$.  \smnote{One could
    say it even more detailed, but would have to make the pullback
    explicit, which would blow up the remark rather
    unpleasantly. Let's trust our readers to be able to do it
    themselves.}


  Preservation
  of all intersections now follows easily. Given subobjects
  $v_i\colon V_i \monoto X$, $i \in I$, and $m\colon Y \monoto FX$,
  with $m \leq Fv_i$ for all $i \in I$. We first assume that $Y$ is
  finitely generated. Take the least
  $z\colon Z \monoto X$ such that $m$ factorizes through $Fz$,
  i.e.~$m \leq Fz$. Then we have $m \leq F(v_i \cap z) \leq Fv_i$
  for all $i \in I$, where the first inclusion uses that $F$ preserves
  finite intersections. Furthermore, $m$ factorizes through $Fv_i$, and
  therefore $z \leq v_i$ by minimality for every $i \in I$. Thus,
  $m \leq Fz \leq F(\bigcap_{i\in I} v_i)$ as desired.

  For arbitrary $m\colon Y \to FX$ write $Y$ as the directed union of
  all its subobjects $s_j\colon Y_j \monoto Y$ with $Y_j$ finitely
  generated. Then every $s_j$ is contained in
  $F(\bigcap_{i\in I} v_i)$ by the previous argument, and therefore so
  is their union $m$.
\end{remark}

Let us conclude this section by coming back to coalgebras to note that
the condition that $F$ preserves intersection is significant for us
because it entails that every $F$-coalgebra has a \emph{reachable
  part}, i.e.~a unique reachable subcoalgebra. Indeed,
recall~\cite{amms13} that for an intersection preserving endofunctor
$F$ on a category $\C$ with intersections a reachable subcoalgebra can
be obtained as the intersection of all subcoalgebras of
$(C,c,i_C)$. Moreover, this intersection is the unique reachable
subcoalgebra of $(C,c,i_C)$. Given two reachable subcoalgebras
$S_1$ and $S_2$ of $(C,c,i_C)$ their intersection forms an $I$-pointed
subcoalgebra of $S_1$ and $S_2$ and so must be isomorphic to both,
thus $S_1 \cong S_2$.

\section{Canonical Graphs}
\label{S:cangraphs}
Note that for a given functor $F\colon \Set\to\Set$ one may define for every set
$X$ a map
\(
  \tau_X\colon FX\to \pow X
\)
by
\begin{equation}
  \hspace*{-5pt}
  \tau_X(t) = \{ x\in X\mid 1\xrightarrow{t} FX\text{ does not factorize through }
  F(X\setminus\{x\})\xrightarrow{Fi} FX \},
  \label{eq:tau}
\end{equation}
where $i\colon X\setminus\set{x} \subto X$ denotes the inclusion map.

Intuitively, $\tau_X(t)$ is the set of elements of $X$ that occur in $t$. 

\begin{definition}[Gumm~\cite{gumm05filter}]
  The \emph{canonical graph} of a coalgebra $c\colon C \to FC$ is the
  graph given by
  \[
    C \xrightarrow{c} FC \xrightarrow{\tau_C} \pow C.
  \]
\end{definition}
Note that for an $I$-pointed coalgebra $(C,c,i_C)$ its canonical graph
is $I$-pointed by $i_C\colon I \to C$, too.
\begin{example}
  For the functor $FX = \{0,1\} \times X^\Sigma$,
  we have for every $i \in \set{0,1}$ and $t\colon
  \Sigma \to X$ that
  \[
    \tau_X(i,t) = \{ t(s) \mid s \in \Sigma\}.
  \]
  Hence, the canonical graph of a deterministic automaton considered
  as an $F$-coalgebra is precisely its usual state transition graph
  (forgetting the labels of transitions and the finality of states).
\end{example}

\begin{lemma}[{Gumm~\cite[Theorem~7.4]{gumm05filter}}]\label{L:Gumm}
  If $F\colon \Set\to\Set$ preserves intersections, then the above
  maps $\tau_X\colon FX\to \pow X$ form a \emph{sub-cartesian
    transformation}, i.e.~for every injective map
  $m\colon X\rightarrowtail Y$ the following diagram is a pullback
  square: 
  \begin{equation}\label{diag:subcar}
    \begin{tikzcd}
      FX
      \arrow[>->]{d}[swap]{Fm}
      \arrow{r}{\tau_X}
      \pullbackangle{-45}
      & \pow X
      \arrow[>->]{d}{\pow m}
      \\
      FY
      \arrow{r}{\tau_Y}
      & \pow Y
    \end{tikzcd}
  \end{equation}
  Conversely, if $\tau$ is a sub-cartesian transformation, then $F$
  preserves intersections.\footnote{For this converse,
    Gumm assumed that $F$ preserves monomorphisms;
    however, this is not needed since $\pow$ preserves monomorphisms
    and monomorphisms are stable under pullback.}
\end{lemma}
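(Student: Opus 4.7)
My plan hinges on an auxiliary notion of \emph{support}: for $t \in FX$, let $\supp(t)$ be the intersection of all subsets $Z \subseteq X$ such that $t$ lies in the image of $Fi_Z \colon FZ \to FX$. If $F$ preserves intersections, then $t$ itself factors through $F\supp(t)$, and a direct unfolding of~\eqref{eq:tau} shows $\supp(t) = \tau_X(t)$: an element $x \in X$ fails to lie in $\supp(t)$ iff $t$ already factors through $F(X \setminus \{x\})$, iff $x \notin \tau_X(t)$. I would establish this characterization first, since both directions of the lemma become much easier once $\tau_X$ is reinterpreted as the support map.

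For the forward direction, I would first note that intersection preservation entails preservation of monomorphisms, so $Fm$ is injective whenever $m$ is. To check naturality of $\tau$, factor $m \colon X \monoto Y$ through its image $m[X] \subto Y$. Using injectivity of $Fm$ and the support characterization, it follows that $\supp(Fm(t)) \subseteq m[X]$, and a short calculation with the minimality of the support yields $\supp(Fm(t)) = m[\supp(t)]$, which is exactly $\pow m \cdot \tau_X = \tau_Y \cdot Fm$. For the pullback property, suppose $t' \in FY$ and $S \subseteq X$ satisfy $\tau_Y(t') = m[S]$. Then $\supp(t') \subseteq m[X]$, so by preservation of the intersection $m[X] \cap \{\text{any factorizing subset}\}$, $t'$ lies in the image of $Fm$; injectivity of $Fm$ gives a unique $t \in FX$ with $Fm(t) = t'$, and applying the naturality just established forces $\tau_X(t) = S$.

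For the converse, let $j_i \colon Z_i \subto X$ ($i \in I$) be a family of subobjects with intersection $j \colon Z \subto X$, and suppose $t \in FX$ lies in the image of each $Fj_i$. From the pullback~\eqref{diag:subcar} for each $j_i$ one reads off $\tau_X(t) \in \pow j_i[\pow Z_i]$, i.e.~$\tau_X(t) \subseteq Z_i$; hence $\tau_X(t) \subseteq Z$. Now the pullback~\eqref{diag:subcar} for $j$ itself, applied to the pair $(t, \tau_X(t))$, produces an element of $FZ$ mapping to $t$ under $Fj$. This exhibits $t$ as lying in $FZ$ (regarded as a subobject of $FX$), proving $\bigcap_{i \in I} FZ_i \subseteq FZ$; the reverse inclusion is automatic.

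The only genuinely delicate step is the naturality calculation $\supp(Fm(t)) = m[\supp(t)]$: the inclusion $m[\supp(t)] \supseteq \supp(Fm(t))$ needs the fact that $Fm$ is injective together with minimality of the support on the source side, and the opposite inclusion requires knowing that $\supp(Fm(t))$ is contained in the image $m[X]$ before one can pull it back along $m$. Once this is in hand, both halves of the lemma reduce to straightforward bookkeeping.
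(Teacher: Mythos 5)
Your argument is correct, and it is essentially the standard one (the paper itself only cites Gumm for this lemma): your $\supp(t)$ is exactly the least bound of $t\colon 1\to FX$ in the sense of \autoref{def:leastfactor}, whose existence under intersection preservation is \autoref{P:leastfact}, and identifying $\tau_X$ with this support map is indeed the key step from which both directions follow as you describe. The only point worth making explicit is in the converse: to get the full universal property of the preserved wide pullback (not just the equality of images $\bigcap_i \mathrm{im}(Fj_i)=\mathrm{im}(Fj)$) you should note that each $Fj_i$, being a pullback of the monomorphism $\pow j_i$ by~\eqref{diag:subcar}, is itself monic — this is precisely the observation in the footnote that lets one drop Gumm's hypothesis that $F$ preserves monomorphisms.
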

%
%
\begin{theorem}[Gumm~{\cite[Theorem~8.1]{gumm05filter}}]
  Assume that $F$ preserves inverse images and intersections.
  Then $\tau\colon F\to \pow$ is a natural transformation.
\end{theorem}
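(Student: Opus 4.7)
The plan is to prove naturality of $\tau$ on every function $f\colon X\to Y$ by factorizing $f = m\cdot e$ in $\Set$ as a surjection $e\colon X\epito Z$ followed by an injection $m\colon Z\monoto Y$, and establishing the naturality square for $m$ and $e$ separately. Since $\pow f = \pow m \cdot \pow e$ and $Ff = Fm \cdot Fe$, combining the two squares yields naturality for $f$.

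The monomorphism case is immediate from \autoref{L:Gumm}: preservation of intersections makes the square \eqref{diag:subcar} a pullback, hence in particular it commutes, which is exactly the required $\pow m\cdot \tau_Z = \tau_Y \cdot Fm$. Note that preservation of inverse images was not used here.

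For the epimorphism case, fix a surjection $e\colon X\epito Z$ and an element $t\in FX$; I want to show that $e[\tau_X(t)] = \tau_Z(Fe(t))$. The key step uses preservation of inverse images: for each $z\in Z$, the inclusion $X\setminus e^{-1}(\{z\}) \hookto X$ is the pullback of $Z\setminus\{z\}\hookto Z$ along $e$, so applying $F$ yields another pullback square. It follows that $Fe(t)$ factorizes through $F(Z\setminus\{z\}) \hookto FZ$ if and only if $t$ factorizes through $F(X\setminus e^{-1}(\{z\})) \hookto FX$. Now I invoke preservation of intersections: since $e$ is surjective, $e^{-1}(\{z\})$ is non-empty, and $X\setminus e^{-1}(\{z\}) = \bigcap_{x\in e^{-1}(\{z\})} (X\setminus\{x\})$, so the mono $F(X\setminus e^{-1}(\{z\})) \monoto FX$ is the intersection of the monos $F(X\setminus\{x\})\monoto FX$ for $x\in e^{-1}(\{z\})$. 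Thus $t$ factorizes through the former if and only if it factorizes through every $F(X\setminus\{x\})\monoto FX$ with $x\in e^{-1}(\{z\})$, equivalently $x\notin \tau_X(t)$ for all $x\in e^{-1}(\{z\})$. Chaining the equivalences, $z\notin \tau_Z(Fe(t))$ iff $z\notin e[\tau_X(t)]$, as desired.

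The main obstacle is precisely this epimorphism case, where both hypotheses on $F$ are needed in tandem: preservation of inverse images reduces the factorization question for $Fe(t)$ to one about $t$, while preservation of intersections glues together the pointwise witnesses supplied by the individual elements of $e^{-1}(\{z\})$. Once both naturality squares are established, composing them along the surjection–injection factorization of $f$ delivers the full naturality of $\tau$.
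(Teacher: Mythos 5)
Your proof is correct. Note that the paper itself gives no proof of this statement --- it is quoted from Gumm's work --- so there is nothing in-paper to compare against; your argument is a clean, self-contained derivation. The surjection--injection factorization is the natural way to organize it: the injective case is indeed just the commutativity half of the sub-cartesian property from \autoref{L:Gumm}, and your surjective case correctly isolates where each hypothesis enters --- preservation of inverse images to transport the factorization question from $Fe(t)$ over $Z\setminus\{z\}$ back to $t$ over $X\setminus e^{-1}(\{z\})$ (the nontrivial direction being exactly the universal property of the preserved pullback applied to the cone $t$, $u$ with $Fe\cdot t = Fj\cdot u$), and preservation of (nonempty, possibly infinite) intersections to decompose $X\setminus e^{-1}(\{z\}) = \bigcap_{x\in e^{-1}(\{z\})}(X\setminus\{x\})$ and reduce to the defining condition of $\tau_X$. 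Surjectivity of $e$ is what guarantees this intersection is nonempty, which matters since set functors can misbehave at $\emptyset$. As a sanity check, your argument fails for the functor $R$ of \autoref{E:R} exactly at the inverse-image step (for $e\colon\{0,1\}\to\{0\}$ the square on $R\emptyset\to RX$ is not a pullback), consistent with that counterexample to naturality.
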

\begin{example}\label{E:R}
  To see that $\tau$ is not a natural transformation in general, one
  may consider the functor $R\colon \Set \to \Set$ defined by $RX = \{(x,y)
    \in X \times X : x \neq y\} + \{*\}$ on sets $X$ and for a function $f\colon X
  \to Y$ put
  \[
    Rf(*) = * \quad\text{and}\quad
    Rf(x,y) = \begin{cases}
      * & \text{if $f(x) = f(y)$}\\
      (f(x),f(y)) & \text{else.}
    \end{cases}
  \]
  Now let  $X = \set{0,1}$, $Y = \set{0}$, and $f\colon X\to Y$ the evident function.
  Then $(0,1)\in RX$, and $\tau_X(0,1) = X$.  Furthermore,
  $\pow f(X) = Y$.  But $Rf(0,1) = *$, and $\tau_Y(*) = \emptyset$.
\end{example}

Our observation in this section is that reachability of a coalgebra
and its canonical graph are equivalent concepts:

\begin{theorem}\label{T:reach}
  Let $F\colon \Set \to \Set$ preserve intersections. Then an $I$-pointed
  coalgebra for $F$ is reachable if and only if so is its canonical
  graph.
\end{theorem}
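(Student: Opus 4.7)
The plan is to exhibit an inclusion-preserving bijection between the $I$-pointed subcoalgebras of $(C,c,i_C)$ and the $I$-pointed subgraphs of its canonical graph $(C,\tau_C\cdot c,i_C)$; both reachability statements then say exactly the same thing, namely that no proper $I$-pointed sub-object exists.

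First I would characterize subcoalgebras at the level of underlying subsets. A subset $m\colon S\subto C$ carries (necessarily unique) subcoalgebra structure $s\colon S\to FS$ with $Fm\cdot s = c\cdot m$ if and only if $c\cdot m$ factorizes through $Fm$, equivalently, if and only if for each $x\in S$ the element $c(m(x))\in FC$ lies in the image of $Fm$. Likewise, $S$ is a subgraph of the canonical graph exactly when $\tau_C(c(m(x)))\subseteq S$ for every $x\in S$, i.e.\ when $\tau_C(c(m(x)))$ lies in the image of $\pow m$. Now the square~\eqref{diag:subcar} supplied by Gumm's Lemma~\ref{L:Gumm} (applicable because $F$ preserves intersections) is a pullback in $\Set$ and therefore computed elementwise; consequently an element $t\in FC$ lies in the image of $Fm$ if and only if $\tau_C(t)$ lies in the image of $\pow m$. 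Applying this to $t = c(m(x))$ for each $x\in S$ shows that the subcoalgebras of $(C,c)$ and the subgraphs of $(C,\tau_C\cdot c)$ are represented by exactly the same subsets of $C$.

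Adding the pointing is straightforward: an $I$-pointed subcoalgebra, respectively subgraph, of $(C,c,i_C)$ is represented by a subcoalgebra, respectively subgraph, $S\subseteq C$ that additionally contains the image of $i_C$. Hence the bijection just established restricts to an inclusion-preserving bijection on $I$-pointed sub-objects. Since reachability of a coalgebra or a graph amounts to having no proper $I$-pointed sub-object, we conclude that $(C,c,i_C)$ is reachable if and only if $(C,\tau_C\cdot c, i_C)$ is.

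The main subtlety lies in the pointwise translation through the pullback square~\eqref{diag:subcar}: one must verify that ``factorization through $Fm$'' and ``factorization through $\pow m$'' reduce to the same elementwise condition on each $x\in S$, which is exactly what the sub-cartesian property furnished by Lemma~\ref{L:Gumm} provides. Beyond this, everything is bookkeeping on subsets of $C$.
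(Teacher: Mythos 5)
Your proof is correct and follows essentially the same route as the paper's: both establish, via Gumm's sub-cartesian square \eqref{diag:subcar}, a bijective correspondence between the $I$-pointed subcoalgebras of $(C,c,i_C)$ and the $I$-pointed subgraphs of its canonical graph, from which the equivalence of the two reachability notions is immediate. The only difference is presentational — you unwind the pullback property elementwise in $\Set$, whereas the paper uses the universal property of the pullback diagrammatically to induce the coalgebra structure on a subgraph.
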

\begin{proof}
  Let $(C,c,i_C)$ be an $I$-pointed $F$-coalgebra. Then we see that
  subcoalgebras of $(C,c,i_C)$ are in one-to-one correspondence with
  subgraphs of the canonical graph. Indeed, given any subcoalgebra
  $m\colon (S,s, i_S) \monoto (C,c,i_C)$, we have that
  $(S,\tau_S \cdot s,i_S)$ is an $I$-pointed subgraph of
  $(C,\tau_C \cdot c, i_C)$ via $m$ due to the commutativity
  of~\eqref{diag:subcar}. Conversely, let $(S,s,i_S)$ be an $I$-pointed subgraph of the
  canonical graph $(C, \tau_C \cdot c, i_C)$ via the monomorphism
  $m\colon S\monoto C$, say. Then, using that~\eqref{diag:subcar} is a
  pullback, we obtain an $F$-coalgebra structure on $S$ turning it
  into a subcoalgebra of $(C,c,i_C)$:
  \[
    \begin{tikzcd}
      S
      \arrow[>->,swap]{d}{m}
      \arrow[dashed]{r}
      \arrow[shiftarr={yshift=5mm}]{rr}{s}
      &
      FS
      \arrow{r}{\tau_S}
      \arrow[>->,swap]{d}{Fm}
      \pullbackangle{-45}
      &
      \pow S
      \arrow{d}{\pow m}
      \\
      C \arrow{r}{c}
      &
      FC \arrow{r}{\tau_C}
      &
      \pow C
    \end{tikzcd}
  \]
  We conclude that $(C,c,i_C)$ does not have any proper subcoalgebra
  w.r.t.~$F$ if and only if its canonical pointed graph
  $(C, \tau_C \cdot c, i_C)$ does not have a proper subcoalgebra
  w.r.t~$\pow$. As we saw in \autoref{E:reach}\ref{E:reach:1}, the
  latter is equivalent to that $I$-pointed graph being reachable, which
  completes the proof.
\end{proof}

\section{Iterative Construction}
\label{S:iter}

This section is devoted to a new iterative construction of the reachable part of
a given $I$-pointed coalgebra $(C, c, i_C)$, the unique reachable subcoalgebra
of $(C,c,i_C)$, reminiscent of breadth-first search for graphs.
\begin{assumption}\label{ass:main}
  Throughout this section we assume that the base category
  $\C$ has arbitrary (small)\sknote{small? SM: Yes, I think this goes without
  saying.} coproducts, is well-powered\smnote{I vote for
    making well-powered an assumption; it makes the whole setting
    nicer; see \autoref{R:ass}, we have no example where it does not
    hold, and we get the nice adjunction
    $\pasttime \dashv \nexttime$! JD: I concur}  and is equipped with an
  $(\E,\M)$-fac\-to\-ri\-za\-tion system, where $\M$ is a class of
  monomorphisms.
\end{assumption}
\begin{remark}\label{R:ass}\smnote{I would not turn this into
    propositions or lemmas. Those are no real results but just a
    couple of easy technical consequences of our
    assumptions that are probably well-known and that we simply
    collect for future use; perfect as a remark, imo.}
  We collect a number of easy consequence of \autoref{ass:main}. 
  \begin{enumerate}
  \item\label{R:ass:1} Note that the above assumptions imply that $\C$ has all
    unions, i.e.~for every object $C$ of $\C$ the partially ordered
    set $\sub(C)$ of its subobjects has all joins. In fact, given a
    family $(m_i\colon C_i \monoto C)_{i \in I}$, their union $m$ is
    given by the following $(\E,\M)$-factorization:
    \[
      \begin{tikzcd}
        \coprod_{i\in I} C_i
        \arrow[shiftarr={yshift=6mm}]{rr}{[m_i]_{i\in I}}
        \arrow[->>]{r}{e}
        & \bigcup_{i\in I} m_i
        \arrow[>->]{r}{m}
        & C.
      \end{tikzcd}
    \]

  \item It follows that $\sub(C)$ is a complete lattice, and therefore
    that $\C$ has all intersections.  Moreover, we show that
    intersections are given by pullbacks (even though we did not
    assume their existence). In fact,
    given the family $(m_i\colon C_i \monoto C)_{i \in I}$ take their
    intersection, i.e.~their meet, $m\colon M \monoto C$ in $\sub(C)$. The morphisms
    $p_i\colon M \monoto C_i$ witnessing $m \leq m_i$ yield the
    projections of the (wide) pullback. Moreover, given any compatible
    cone $f_i\colon X \to C_i$ such that
    $m_i \cdot f_i = m_j \cdot f_j$ for all $i,j \in I$ take the
    $(\E, \M)$-factorization $n \cdot e$ of that morphism and use the
    diagonal fill-in property
    \[
      \begin{tikzcd}
        X \arrow[->>]{r}{e} \arrow[swap]{d}{f_i}
        & X' \arrow[>->]{d}{n}
        \arrow[dashed]{ld} \\
        C_i \arrow[>->,swap]{r}{m_i} & C
      \end{tikzcd}
    \]
    in order to see that $n \leq m_i$ for all $i \in I$. Thus, we have
    $n \leq m$, which is witnessed by a (necessarily unique) morphism
    $h\colon X' \monoto M$ such that $n = m \cdot h$. Then $h \cdot e$
    is the desired unique factorizing morphism showing that $M$ is a
    wide pullback of the $m_i$.
    
  \item In addition, using the well-poweredness of $\C$ we see that it
    has \emph{preimages}, i.e.~pullbacks along morphisms in
    $\M$. \sknote{Isn't this an application of adjoint functor
      theorem? SM: Yes, you are right. But I do not think that making
      this explicit would simplify the current presentation.}
    Indeed,
    suppose we are given a morphism $f\colon X \to Y$ and a subobject
    $m\colon M \monoto Y$. Then we form the family of all subobjects
    $m_i\colon M_i \monoto X$ for which there exists a restricting
    morphism $f_i\colon M_i \to M$, i.e.~$f\cdot m_i = m \cdot f_i$,
    and we take their union:
    \begin{equation}\label{eq:u}
      (u\colon U \monoto X) 
      :=
      \bigcup \big\{m_i\colon M_i\monoto X \mid
      \text{$\exists f_i\colon M_i \to M$ with $f \cdot m_i = m\cdot
        f_i$}\big\}
    \end{equation}
    Using the diagonal fill-in
    property, we obtain a morphism $f'\colon U \to M$ such that the
    following diagram commutes:

    \[
      \begin{tikzcd}
        \coprod_{i \in I} M_i
        \arrow{rd}{[f_i]_{i\in I}}
        \arrow[->>,swap]{d}{e}
        \arrow[shiftarr={xshift=-10mm},swap]{dd}{[m_i]_{i \in I}}
        \\
        U
        \arrow[>->,swap]{d}{u}
        \arrow[dashed]{r}{f'}
        &
        M
        \arrow[>->]{d}{m}
        \\
        X \arrow{r}{f} & Y
      \end{tikzcd}
    \]
    In order to show that the lower square is a pullback, suppose that
    we have morphisms $p\colon Z \to X$ and $q\colon Z \to M$ with
    $f\cdot p = m \cdot q$. Take the $(\E,\M)$-factorization $p = (Z
    \stackrel{e'}{\epito} I \stackrel{m'}\monoto X)$. Then $(f \cdot
    m') \cdot e' = m \cdot q$. Hence, by the unique
    diagonal fill-in property, we obtain some $d\colon I \to M$ such
    that $m \cdot d = f \cdot m'$ and $d \cdot e' = q$. Thus, $m'\colon
    I \monoto X$ is one of the subobjects $m_i$ in~\eqref{eq:u}, and therefore $m'
    \leq \bigcup m_i = u$, i.e.~we have a morphism $s\colon I \to P$ with
    $u \cdot s = m'$. Then $h:= s \cdot e'\colon Z \to U$
    is the desired factorization of $p, q$. Indeed, we have
    \[
      u \cdot (s \cdot e') = m' \cdot e' = p,
    \]
    and to see that $f' \cdot h = q$ we use that $m$ is a
    monomorphism and compute
    \[
      m\cdot f' \cdot h = f \cdot u \cdot h = f \cdot p =
      m \cdot q.
    \]

  \end{enumerate}
\end{remark}
\begin{remark}\label{R:kleisli}
  In the following example and in \autoref{S:kleisli} we shall mention Kleisli
  categories. Recall that the Kleisli category $\Kl(T)$ for a monad
  $(T,\mu,\eta)$ on $\C$ has the same objects as $\C$ and hom-sets
  $\Kl(T)(X,Y) = \C(X,TX)$. We use the notation
  $f\colon X\kleislito Y$ to denote a morphism $f\in \Kl(T)(X,Y)$, and
  we call such morphisms \emph{Kleisli morphisms}. The composition of
  Kleisli morphisms $f\colon X\kleislito Y$ and
  $g\colon Y\kleislito Z$ is denoted by $g\circ f$ and defined by
  \[
    g\circ f = (X \xrightarrow{f} TY \xrightarrow{Tg} TTZ
    \xrightarrow{\mu_Z} TZ).\footnote{Note that in terms of the Kleisli
      extension $\ext g = \mu_Z \cdot Tg$ we have that $g \circ f =
      \ext g \cdot f$.}
  \]
  The identity morphism on $X$ is given by the unit $\eta_X\colon X\to
  TX$ of the monad.
\end{remark}
\begin{example}\label{E:ass}
  \begin{enumerate}
  \item\label{E:ass:1}
    Recall that every complete category $\C$ is equipped with a
    (strong epi, mono)-fact\-o\-ri\-za\-tion system and with an (epi,
    strong mono)-fac\-to\-ri\-za\-tion system~\cite[Theorems~14.17 and~14.19]{ahs09}.
    
    Hence, every complete and well-powered category $\C$ with
    coproducts meets \autoref{ass:main}.

  \item\label{E:ass:2}
    The category $\Rel$ of sets and relations has
    all coproducts and a factorization system given by
    \[
      \E = \text{all surjective relations,}
      \qquad\text{and}\qquad
      \M  = \text{all injective maps.}
    \]
    Note that $\Rel$ is the Kleisli category of the power-set monad.

  \item\label{E:ass:3} A similar factorization system can be obtained
    for stochastic relations which give for a point in a set $X$ a
    probability distribution over the points in another set $Y$ (in
    lieu of a set of points in $Y$). These stochastic relations are
    given by morphisms $X \to \Dist Y$ in the Kleisli category of the
    distribution monad $\Dist$ on $\Set$. This monad is given as a
    Kleisli triple $(\Dist, \eta, \ext{(-)})$ as follows: for every
    set $X$ we have
    \[
      \Dist X = \set{f\colon X \to [0,1] \mid \sum_{x \in X} f(x) = 1}
    \]
    (note that the above sum necessarily has at most countably many non-zero
    summands) and $\eta_X\colon X \to \Dist X$ given by the Dirac distribution
    \[
      \eta_X(x)(y) = \begin{cases}
        1 & \text{if }x = y, \\
        0 & \text{else}.
      \end{cases}
    \]
    The Kleisli extension of a map $h\colon X \to \Dist Y$ is the map
    $\ext h\colon \Dist X \to \Dist Y$ given by 
    \[
      \ext h(f)(y) = \sum_{x \in X} f(x) \cdot h(x)(y). 
    \]
    
    The Kleisli category $\Kl(\Dist)$ has all coproducts and a
    factorization system given by the following two classes of morphisms:
    \begin{align*}
      \E &= \set{ e\colon X \to \Dist Y \mid \forall y \in Y\, \exists x
      \in X\colon e(x)(y) \neq 0}, \text{and}\\
      \M & = \set{m\colon X \to \Dist Y \mid \text{$m = \eta_Y \cdot
          m'$ for some injective map $m'\colon X \to Y$}}.
    \end{align*}
    Hence, the class $\M$ consists essentially of injective maps
    considered as morphisms in $\Kl(\Dist)$. It is easy to see that the
    two classes of morphisms are closed under composition, and that
    every morphism in $\Kl(\Dist)$ has an essentially unique
    $(\E,\M)$-factorization, given by the least bounds of
    $\D$. Moreover, $\E$ and $\M$
    contain all isomorphisms of $\Kl(\Dist)$. In fact, we show below
    that those isomorphisms correspond precisely to bijective maps;
    more precisely, a morphism $h\colon X \to \Dist Y$ is an
    isomorphism if and only if $h = \eta_Y \cdot h'$ for some
    bijective map $h'\colon X \to Y$. It follows that
    $(\E,\M)$ is a factorization system~\cite[Theorem~14.7]{ahs09}.

    We proceed to prove the above characterization of isomorphisms in
    $\Kl(\Dist)$, i.e.~we show that $h\colon X \to \Dist(Y)$ is an
    isomorphism if and only if for every $x \in X$ there exists a
    $y \in Y$ with $h(x)(y) = 1$ and for every $y \in Y$ there is
    exists a unique $x \in X$ with $h(x)(y) > 0$ (and hence $h(x)(y) = 1$).

    The `if' direction holds, because the first condition states that $h =
    \eta_Y\cdot h'$ for some map $h'\colon X\to Y$ and the second condition
    states that $h'$ is bijective.

    For the `only if' direction suppose that
    $g\colon Y\to \Dist X$ is inverse to $h$. Observe that for every $x\in X$ there
    is some $y\in Y$ with $g(y)(x) > 0$, because $1 = (g\circ h)(x)(x) =
    \sum_{y\in Y} g(y)(x)\cdot h(x)(y)$, and similarly for every $y
    \in Y$ there is some $x \in X$ with $h(x)(y) >0$. By the
    definition of $\Dist X$, there is for every $y\in Y$ some $x\in X$ with $g(y)(x) \neq
    0$, and similarly, for every $x \in X$ there is some $y \in Y$
    with $h(x)(y) > 0$. 

    To verify the first condition, let $x\in X$ and $y_1,y_2\in Y$ with
    $h(x)(y_1)\neq 0 \neq h(x)(y_2)$. By the previous observation, there is some
    $y\in Y$ with $g(y)(x) > 0$, and therefore $(h \circ g)(y)(y_1) \neq 0 \neq
    (h \circ g)(y)(y_2)$. Hence, $y_1=y=y_2$ since $h\circ g$ is the identity
    morphism $\eta_Y$ in $\Kl(\Dist)$. So for every $x\in X$ there is
    a unique $y\in Y$ with $h(x)(y) > 0$ and therefore $h(x)(y) = 1$.

    For the second condition, we have already observed that for every $y\in Y$
    there exists some $x\in X$ with $h(x)(y) > 0$. For the uniqueness, let $y\in Y$,
    $x_1,x_2\in X$ with $h(x_1)(y)\neq 0 \neq h(x_2)(y)$. Let $x \in
    X$ be such that $g(y)(x) > 0$. Then $(g \circ
    h)(x_1)(x) \neq 0 \neq (g \circ h)(x_2)(x)$, and hence $x_1=x=x_2$ since
    $g\circ h$ is the identity morphism $\eta_X$ in $\Kl(\Dist)$.
    
  \item\label{E:ass:4} Let $\S = (S, +, \cdot, 0, 1)$ be a semiring. Then similarly
    as in point~\ref{E:ass:3} we obtain a monad $\S^{(-)}$ on $\Set$
    given as a Kleisli triple as follows: for every set $X$, we have
    \[
      \S^{(X)} = \set{f\colon X \to S \mid \text{$f(x) \neq 0$ for
          finitely many $x \in X$}},
    \]
    and the unit $\eta_X\colon X \to \S^{(X)}$ and the Kleisli
    lifting are defined precisely as in point~\ref{E:ass:3}.

    We also consider the same classes $\E$ and $\M$ as in the previous 
    point~\ref{E:ass:3}, and they can be shown to form a factorization
    system provided that the given semiring fulfills the following
    conditions: $(S, +, 0)$ and $(S, \cdot, 1)$ are positive monoids,
    i.e.~whenever $a + b = 0$ then $a = 0$ or $b= 0$, and similarly
    for the multiplication and $1$, and the semiring is
    zero-divisor-free, i.e.~whenever $a \cdot b = 0$ then $a = 0$ or
    $b = 0$.
  \end{enumerate}
\end{example}
Our construction of the reachable part is based on the following
notion capturing the part of an object $Y$ that is actually used by a
morphism $f\colon X \to FY$. For the class of all monomorphisms this
notion was introduced by Alwin Block~\cite{Block12} under the name
``base'':
\begin{definition}\label{def:leastfactor}
  Given a functor $F\colon \C\to \D$ and a class $\M$ of monomorphisms
  of $\C$. We say that $F$ \emph{has least bounds (w.r.t.~$\M$)} if
  for every morphism $f\colon X\to FY$ there is a \emph{least}
  morphism $m\colon Z\monoto Y$ in $\M$ such that $f$ factorizes through
  $Fm$. This means, there exists some $g\colon X\to FZ$ with
  \[
    \begin{tikzcd}
      X
      \arrow{r}{f}
      \arrow{dr}[swap]{g}
      & FY
      \\
      & FZ
      \arrow{u}[swap]{Fm}
    \end{tikzcd}
    \qquad\text{in $\D$},
  \]
  and for every $m'\colon Z'\to Y$ in $\M$ and 
  $g'\colon X\to FZ'$ with $Fm'\cdot g' = f$ there exists a
  (necessarily unique) $h\colon Z\to Z'$ with $m'\cdot h = m$,
  i.e.~$m \leq m'$ in $\sub(Y)$.

  The triple $(Z, g, m)$ is called the \emph{bound of $f$}, and the
  above triple $(Z', g', m')$ is said to \emph{compete} with the bound.
\end{definition}
\begin{proposition}\label{P:comp}
  Functors having least bounds are closed under composition.
\end{proposition}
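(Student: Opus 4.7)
My plan is a two-stage construction: given any $f\colon X \to GFY$, I apply $G$'s least-bounds property first, then $F$'s, and show that the composite yields the sought least bound for $f$ relative to $GF$.

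In stage one, I view $f$ as a morphism with codomain $G(FY)$ and invoke $G$'s least-bounds property. This yields a bound $(W, g, n)$ with $n\colon W \monoto FY$ in $\M$ and $g\colon X \to GW$ satisfying $f = Gn \cdot g$. In stage two, the morphism $n\colon W \to FY$ has codomain of the form $FY$, so $F$'s least-bounds property applies to it, producing a bound $(Z, h, m)$ with $m\colon Z \monoto Y$ in $\M$ and $h\colon W \to FZ$ satisfying $n = Fm \cdot h$. Composing, $f = GFm \cdot (Gh \cdot g)$, so $m$, together with $Gh \cdot g\colon X \to GFZ$, exhibits $f$ as factoring through $GFm$.

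For minimality, let $(Z', g', m')$ compete with $m$ for the $GF$-bound of $f$, i.e.~$m'\colon Z' \monoto Y$ lies in $\M$ and $f = GFm' \cdot g'$; the goal is $m \leq m'$. The idea is to push minimality through the two layers in reverse order. Treating $(FZ', g', Fm')$ as a competitor to the $G$-bound $(W, g, n)$ of $f$, minimality of $n$ yields a morphism $p\colon W \to FZ'$ with $n = Fm' \cdot p$. The triple $(Z', p, m')$ then competes with the $F$-bound $(Z, h, m)$ of $n$, and minimality of $m$ delivers the desired inequality $m \leq m'$.

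The main obstacle is the first of these two invocations: to treat $Fm'$ as an $\M$-competitor for $G$'s least bounds, one needs $Fm' \in \M$, i.e.~that $F$ preserves $\M$-morphisms. I expect this preservation either to be implicit in the paper's setup or to follow from the forthcoming characterization of ``having least bounds'' in terms of intersection preservation (cf.~the discussion following \autoref{P:Tr}). If it were unavailable, one could try instead to $(\E,\M)$-factor $Fm' = m'' \cdot e''$, apply $G$-minimality to obtain $n = m'' \cdot k$, and then attempt to lift $k$ along $e'' \in \E$ to produce the needed map $W \to FZ'$; but such a diagonal lift is not automatic, so preservation of $\M$ under $F$ really is the natural side condition, after which the argument reduces to the two-step transfer above.
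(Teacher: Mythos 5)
Your proof is correct and follows essentially the same two-stage route as the paper's: take the $G$-bound of $f$ first, then the $F$-bound of the resulting morphism into $FY$, and for minimality push a competitor back through the two layers in reverse order. The side condition you flag --- that $F$ must carry $\M$-morphisms into the class with respect to which $G$ has least bounds, so that $Fm'$ is a legitimate competitor for the $G$-bound --- is genuinely needed and is left implicit in the paper's proof as well; it is harmless in the paper's applications, where $F$ is assumed to preserve $\M$-morphisms.
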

\begin{proof}
  Let $F\colon \C \to \D$ and $G\colon \D \to \D'$ have least bounds
  w.r.t~the classes $\M$ and $\M'$ of $\C$ and $\D$, respectively. We
  will prove that $GF$ has least bounds w.r.t.~$\M$. In order to see
  this consider a morphism $f\colon X \to GFY$. Then first take its
  bound w.r.t.~$G$ to obtain $g\colon X \to GZ$ and
  $m\colon Z \monoto FY$ in $\M'$ such that $Gm\cdot g = f$, and then
  take the bound of $m$ w.r.t.~$F$ to obtain $g'\colon Z \to FZ'$ and
  $m'\colon Z'\monoto Y$ in $\M$ such that $Fm' \cdot g' = m$:
  \[
    \begin{tikzcd}
      X \arrow{rr}{f} \arrow[swap]{rd}{g}
      & &
      GFY
      \\
      &
      GZ \arrow[->]{ru}{Gm} \arrow[swap]{r}{Gg'}
      &
      GFZ'
      \arrow[->,swap]{u}{GFm'}
    \end{tikzcd}
  \]
  Then $Gg' \cdot g$ and $m'$ form the desired bound of $f$
  w.r.t.~$GF$. Indeed, given any $g''\colon X \to GFZ''$ and
  $m''\colon Z''\monoto Y$ with $GFm'' \cdot g'' = f$ one first uses
  minimality of the bound $(Z,g,m)$ to obtain some
  $h\colon Z \to FZ''$ with $Fm'' \cdot h = m$, and then one uses the
  minimality of $(Z',g',m')$ w.r.t.~$F$ to obtain $h'\colon Z'\to Z''$
  such that $m'' \cdot h' = m'$ as required.
\end{proof}
Gumm~\cite[Corollary~4.8]{gumm05filter} proved that, in the case
where $F$ is an endofunctor on $\Set$ and $\M$ is the class of all
monomorphisms, $F$ has least bounds if and only if it preserves
intersections. We now provide the proof in our setting, and we slightly extend the
result by a statement involving the following operator, which extends
the ``next time'' operator of Jacobs~\cite{Jacobs02} for coalgebras to
arbitrary morphisms:
\begin{definition}\label{D:nexttime}
  Suppose that $F\colon \C \to \C$ preserves $\M$-morphism, i.e.~$Fm$
  lies in $\M$ for every $m$ in $\M$. For every morphism
  $f\colon X \to FY$ we define the operator
  \[
    \nexttime_f\colon \sub(Y) \to \sub(X)
  \]
  as follows (we drop the subscript $f$ whenever this morphism is
  clear from the context): given a subobject $m\colon S \to Y$ we form the preimage
  of $Fm$ under $f$, i.e.~we form the pullback below: 
  \[
    \begin{tikzcd}
      \nexttime S
      \arrow[>->,swap]{d}{\nexttime m}
      \arrow{r}{f[m]}
      \pullbackangle{-45}
      &
      FS \arrow[>->]{d}{Fm}
      \\
      X \arrow{r}{f} & FY 
    \end{tikzcd}
  \]
\end{definition}
\begin{remark}
  Note that with our convention to take subobjects w.r.t.~the class
  $\M$, a functor $F\colon \C \to \C$ \emph{preserves
  intersections} if and only if it preserves $\M$-morphisms and (wide) pullbacks
  of families of $\M$-morphisms.
\end{remark}
\begin{proposition}\label{P:leastfact}
  Let $F\colon \C \to \C$ preserve $\M$-morphisms. Then the following
  are equivalent:
  \begin{enumerate}
  \item \label{P:leastfact:intsect} $F$ preserves intersections.
  \item \label{P:leastfact:least} $F$ has
    least bounds w.r.t.~$\M$. 
  \item\label{P:leastfact:3} For every $f\colon X \to FY$, the
    operator $\nexttime_f$ has a left-adjoint.
  \end{enumerate}
\end{proposition}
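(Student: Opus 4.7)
The plan is to establish the cyclic implications $(1)\Rightarrow(2)\Rightarrow(1)$ together with the equivalence $(2)\Leftrightarrow(3)$, where the latter amounts to unfolding the adjunction condition in terms of bounds.

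For $(1)\Rightarrow(2)$, given $f\colon X\to FY$, I would use well-poweredness to collect the (essentially small) family of all subobjects $m_i\colon Z_i\monoto Y$ in $\M$ through which $f$ factorizes, together with witnessing morphisms $g_i\colon X\to FZ_i$. Let $m\colon Z\monoto Y$ be their intersection in $\sub(Y)$. Since $F$ preserves this intersection, $Fm$ is the intersection of the $Fm_i$ in $\sub(FY)$, and the $g_i$ form a compatible cone over the corresponding wide pullback, producing the required $g\colon X\to FZ$ with $Fm\cdot g=f$. Minimality of $m$ among all competing subobjects is then automatic by construction.

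For $(2)\Rightarrow(1)$, consider a family $(m_i\colon Y_i\monoto Y)_{i\in I}$ with intersection $m\colon M\monoto Y$. Since $F$ preserves $\M$-morphisms, the subobject ordering transports through $F$, giving $Fm\leq Fm_i$ for every $i$. For the reverse inequality, take any $n\colon N\monoto FY$ with $n\leq Fm_i$ for all $i\in I$ and apply the least-bound property to the morphism $n\colon N\to FY$, obtaining $(Z,g,m')$ with $Fm'\cdot g=n$. Since $n$ factorizes through each $Fm_i$, minimality gives $m'\leq m_i$ for every $i$, hence $m'\leq m$; applying $F$ then yields $n\leq Fm'\leq Fm$, showing $Fm$ is the desired intersection.

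For $(2)\Leftrightarrow(3)$, I would first observe that for a subobject $n\colon X'\monoto X$ and $m\colon S\monoto Y$, the pullback defining $\nexttime_f(m)$ makes $n\leq \nexttime_f(m)$ equivalent to $f\cdot n$ factorizing through $Fm$. Assuming (2), define $\pasttime_f(n)$ to be the bound of $f\cdot n$; then
\[
  \pasttime_f(n)\leq m \quad\Longleftrightarrow\quad f\cdot n \text{ factorizes through } Fm \quad\Longleftrightarrow\quad n\leq \nexttime_f(m),
\]
exhibiting $\pasttime_f\dashv\nexttime_f$. Conversely, from a left-adjoint $\pasttime_f$, instantiating the adjunction at $\id_X$ shows that $\pasttime_f(\id_X)$ has exactly the universal property of the bound of $f$, with the factorization morphism furnished by the counit.

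The main obstacle I expect lies in $(2)\Rightarrow(1)$: one must apply the least-bound property not to $f$ itself but to the mediating morphism $n\colon N\to FY$, and then carefully transport the resulting inequality back through $F$, using that $F$ preserves $\M$-morphisms so that the subobject ordering is respected. A secondary delicacy appears in $(1)\Rightarrow(2)$, where well-poweredness is essential for the collection of factorizing subobjects to form a set (rather than a proper class), ensuring that their intersection is actually available within the ambient hypotheses of \autoref{ass:main}.
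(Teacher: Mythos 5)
Your proof is correct, but it routes two of the three equivalences differently from the paper. The implication \ref{P:leastfact:intsect}$\implies$\ref{P:leastfact:least} is essentially the paper's (intersection of all factorizing subobjects, with well-poweredness guaranteeing this is a legitimate intersection). For \ref{P:leastfact:least}$\implies$\ref{P:leastfact:intsect} the paper tests the image of the intersection against an \emph{arbitrary} competing cone $(c_i\colon C\to FY_i)$ for the wide pullback of the $Fy_i$, taking the bound of $Fy_i\cdot c_i$ and producing the mediating morphism as $Fv\cdot f'$; you instead test only against subobjects $n\colon N\monoto FY$, which establishes that $Fm$ is the \emph{meet} of the $Fm_i$ in $\sub(FY)$. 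That does suffice, but only via \autoref{R:ass}: meets in $\sub(FY)$ carry the universal property of the wide pullback with respect to arbitrary cones (factorize the cone's common composite as $n\cdot e$ with $n\in\M$ and use diagonal fill-in, noting $Fy_i\in\M$). Since ``preserves intersections'' is defined through wide pullbacks, you should make this reduction explicit; as written your argument proves a priori only the weaker meet-preservation statement. For the third item the paper connects \ref{P:leastfact:3} with \ref{P:leastfact:intsect} directly --- specializing $f=\id_{FY}$ so that $\nexttime=F(-)$ is a right adjoint and hence preserves meets, and conversely using stability of intersections under preimage --- whereas you connect \ref{P:leastfact:3} with \ref{P:leastfact:least}, constructing the left adjoint explicitly as the bound operator; your \ref{P:leastfact:least}$\implies$\ref{P:leastfact:3} is in substance the paper's later \autoref{P:adj}, and your extraction of the bound of $f$ as $\pasttime_f(\id_X)$ is a clean shortcut the paper does not take (legitimate because $\id_X\in\M$, as $\E\cap\M$ is the class of isomorphisms). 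One small slip: the factorization morphism $g\colon X\to F(\pasttime_f(\id_X))$ is furnished by the \emph{unit} $\id_X\leq\nexttime_f(\pasttime_f(\id_X))$ of the Galois connection, not the counit.
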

\noindent
Note that since $F$ preserves $\M$-morphisms, the factor $g$ in 
\autoref{def:leastfactor} is uniquely determined.
\begin{proof}
  For \ref{P:leastfact:3}$\implies$\ref{P:leastfact:intsect} choose
  $f = \id_{FY}$ then $\nexttime\colon m \mapsto Fm$ is a right-adjoint
  and so preserves all meets, i.e.~$F$ preserves intersections.

  The converse
  \ref{P:leastfact:intsect}$\implies$\ref{P:leastfact:3} 
  follows from the easily established fact that intersections are stable
  under preimage, i.e.~for every morphism $f\colon X \to Y$ and every
  family $m_i\colon S_i \monoto Y$ of subobjects the intersection
  $m\colon P \monoto X$ of the preimages of the $m_i$ under $f$ yields
  a pullback
  \[
    \begin{tikzcd}
      P
      \arrow[>->,swap]{d}{m} \arrow{r} \pullbackangle{-45} & \bigcap S_i
      \arrow[>->]{d}{\bigcap m_i}\\
      X \arrow{r}{f} & Y
    \end{tikzcd}
  \]
  Thus, if $F$ preserves intersections, so does $\nexttime$, whence it
  is a right-adjoint.
  
  For \ref{P:leastfact:intsect}$\implies$\ref{P:leastfact:least},
  consider $f\colon X\to FY$ and define $m\colon Y' \monoto Y$ to be
  the intersection of all subobjects with the desired factorization
  property:
  \begin{equation}\label{eq:int}
    (m\colon Y'\rightarrowtail Y) := \bigcap\big\{m_i\colon Y_i\rightarrowtail Y
    \mid \exists f_i\colon X\to FY_i\text{ with } Fm_i\cdot f_i = f\big\}
  \end{equation}
  This intersection exists since $\C$ is well-powered, and it is
  preserved by $F$. The witnessing morphisms $f_i\colon X\to FY_i$
  from~\eqref{eq:int} form a cone for this intersection, inducing a
  unique $f'\colon X\to FY'$ such that $Fs_i \cdot f' = f_i$ for all
  $i$, where $s_i\colon Y' \monoto Y_i$ are the morphisms witnessing
  $m \leq m_i$, i.e.~we have $m_i \cdot s_i = m$. It follows that we
  have
  \[
    Fm\cdot f' = Fm_i \cdot Fs_i \cdot f' = Fm_i \cdot f_i = f,
  \]
  whence $(Y', f', m)$ is the desired bound of $f$. In fact, minimality
  clearly holds: whenever $(\bar Y, g, \bar m)$ competes with that
  triple, we see that $\bar m$ is contained in the set
  in~\eqref{eq:int}, thus $m \leq \bar m$.

  For \ref{P:leastfact:least}$\implies$\ref{P:leastfact:intsect}, consider an
  intersection 
  \[
    (w\colon W \monoto Z)
    =
    \bigcap\big\{y_i\colon Y_i\rightarrowtail Z\mid i\in I \big\},
  \]
  and let $w_i\colon W\to Y_i$, $i\in I$, denote the corresponding
  pullback projections. Suppose we have a competing cone
  \[
    (c_i\colon C\to FY_i)_{i\in I}\text{ with } Fy_i\cdot c_i = Fy_j\cdot
    c_j\text{ for all }i,j\in I.
  \]
  We can assume wlog that $I\neq \emptyset$, because for
  $I=\emptyset$, the intersection $w=\id_Z$ is preserved by every
  functor.  We need to prove that there exists a unique morphism 
  $u\colon C\to FW$ such that $Fw_i\cdot u = c_i$ for all $i \in
  I$. Using our hypothesis~\ref{P:leastfact:least}, we take the bound $(Z', f', z)$
  of $f := Fy_i\cdot c_i$ for some $i \in I$. Hence, the following
  diagrams commute for all $i \in I$:
  
  \[
    \begin{tikzcd}
      & FY_i
      \arrow{d}{Fy_i}
      \\
      C
      \arrow{ur}{c_i}
      \arrow{dr}[swap]{f'}
      \arrow{r}{f}
      & FZ
      \\
      & FZ'
      \arrow{u}[swap]{Fz}
    \end{tikzcd}
  \]
  For every $i\in I$, the triple $(Y_i, c_i, y_i)$ competes with the
  bound of $f$. Hence, we have, for every $i\in I$, a unique
  $z_i\colon Z'\monoto Y_i$ with $y_i\cdot z_i=z$. Thus,
  $(z_i\colon Z'\monoto Y_i)_{i\in I}$ is a competing cone for the
  intersection of all $y_i$, and so we obtain a unique morphism
  $v\colon Z' \to W$ such that the following triangles commute:
  \[
    \begin{tikzcd}[sep = 10mm]
      & Y_i
      \\
      Z' \arrow[>->]{ur}{z_i}
      \arrow{r}{v}
      & W \arrow[>->]{u}[swap]{w_i}
    \end{tikzcd}
    \quad\text{for every $i\in I$}.
  \]

  Furthermore, since every $Fy_i$ lies in $\M$ and is therefore
  monomorphic, the following diagrams commute:
  \[
    \begin{tikzcd}
      FY_i
      \arrow[>->]{r}{Fy_i}
      & FZ
      \\
      C
      \arrow{u}{c_i}
      \arrow{r}{f'}
      & FZ'
      \arrow{u}[swap]{Fz}
      \arrow{ul}[description]{Fz_i}
    \end{tikzcd}
    \qquad
    \text{for every $i \in I$.}
  \]
  
  Now let $u := Fv\cdot f'\colon C\to FW$. Then the following diagram
  commutes: 
  \[
    \begin{tikzcd}[sep = 10mm]
      & & FY_i
      \\
      C
      \arrow[shiftarr={yshift=-6mm}]{rr}{u}
      \arrow[bend left=10]{urr}{c_i}
      \arrow{r}{f'}
      & FZ' \arrow{ur}{Fz_i}
      \arrow{r}{Fv}
      & FW \arrow[>->]{u}[swap]{Fw_i}
    \end{tikzcd}
    \quad\text{for every $i\in I$},
  \]
  as desired. Finally, since $Fw_i$ is monomorphic for every $i\in I$ and $I\neq
  \emptyset$, $u$ is the unique morphism such that $Fw_i \cdot u =
  c_i$ for every $i \in I$.
\end{proof}
\begin{assumption}
  \label{ass:main:F}
  In addition to \autoref{ass:main} we now assume for the remainder of
  this section that $F\colon \C \to \C$ is a functor preserving
  intersections (equivalently, $F$ has least bounds).
\end{assumption}
\takeout{
\begin{remark}
  Note that in the above proof well-poweredness of $\C$ is only used
  in the proof of
  \ref{P:leastfact:intsect}$\implies$\ref{P:leastfact:least}. Observe
  further that the equivalence of \ref{P:leastfact:intsect} and
  \ref{P:leastfact:least} only needs the existence of intersections,
  and the equivalence of \ref{P:leastfact:intsect} and
  \ref{P:leastfact:3} needs the existence of preimages, in addition.
\end{remark}}
\begin{remark}
  Note that if $F$ is a finitary functor on $\C = \Set$ with the usual
  factorization system given by surjective and injective maps, then we
  do not need to assume that $F$ preserves intersections. In fact,
  using \autoref{cor:fin}, we may work with the Trnkov\'a hull
  $\bar F$ recalling that the category of $\bar F$-coalgebras is
  isomorphic to the category of $F$-coalgebras.
\end{remark}
\begin{example} Let us continue \autoref{E:ass}.
  \begin{enumerate}
  \item Every intersection-preserving functor on a complete and
    well-powered category has least bounds. Note that this does not
    need the existence of coproducts; in fact, the proof of
    \autoref{P:leastfact} just needs the existence of intersections in
    $\C$.
  \item It is easy to see that every functor $\bar F$ on $\Rel$ extending
    an intersection-preserving set functor $F$ satisfies our assumptions,
    i.e.~$\bar F$ preserves maps and injective ones. Moreover, given a
    morphism $f\colon X \to FY$ in $\Rel$, we write it as a map
    $f\colon X \to \pow FY$. Using that $\pow\colon \Set \to \Set$
    preserves intersection and thus has least bounds, the
    argument of \autoref{P:comp} instantiated to show that $\pow F$
    has least bounds also shows how to obtain the bound of $f$
    w.r.t.~$\bar F$ on $\Rel$.

  \item A similar argument holds for extended functors $\bar F$ on
    $\Kl(\Dist)$ and $\Kl(\S^{(-)})$ for a semiring $\S$ satisfying the
    conditions mentioned in \autoref{E:ass}\ref{E:ass:4}. 

    However, we will see in \autoref{S:kleisli} that one
    can construct the reachable part of an $\bar F$-coalgebra even if
    one does not have a factorization system on the Kleisli
    category, i.e.~without any further assumption on the semiring
    $\S$. 
  \end{enumerate}
\end{example}
From the fact that $F$ has least bounds we obtain for every morphism 
$f\colon X \to FY$ the following operator mapping subobjects of $X$ to
those of $Y$: 
\begin{definition}
  Let $f\colon X \to FY$ be a morphism. The operator
  \[
    \pasttime_f\colon \sub(X) \to \sub(Y)
  \] 
  takes a subobject $m\colon S \monoto X$ to the bound of $f \cdot m$.  In
  particular, we have the commutative square below (we
  omit the subscript whenever $f$ is clear from the context):
  \[
    \begin{tikzcd}
      S \ar[>->,swap]{d}{m} \ar{r}{g} & F(\pasttime S) \ar[>->]{d}{F(\pasttime m)} \\
      X \ar{r}{f} & FY
    \end{tikzcd}
  \]
\end{definition}
\begin{proposition}\label{P:adj}
  For every morphism $f\colon X \to FY$, the operator $\pasttime$ is
  the left-adjoint of the ``next time'' operator $\nexttime$ from
  \autoref{D:nexttime}.

  Consequently, $\pasttime$ preserves all unions, and in particular it
  is monotone. 
\end{proposition}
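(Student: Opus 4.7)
The plan is to verify the adjunction $\pasttime_f \dashv \nexttime_f$ directly by checking the universal-property equivalence $\pasttime m \leq n \iff m \leq \nexttime n$ for subobjects $m\colon S \monoto X$ and $n\colon T \monoto Y$. Both directions are essentially a translation between (i) the defining square of the bound of $f\cdot m$ and (ii) the defining pullback square of $\nexttime n$.

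For the direction $\pasttime m \leq n \implies m \leq \nexttime n$, I would assume a witness $h\colon \pasttime S \to T$ with $n \cdot h = \pasttime m$. Combining this with the bound square $F(\pasttime m)\cdot g = f\cdot m$, I obtain
\[
  Fn \cdot (Fh \cdot g) = F(n\cdot h)\cdot g = F(\pasttime m)\cdot g = f \cdot m,
\]
so the pair $(m, Fh\cdot g)$ forms a cone over the pullback defining $\nexttime n$. The universal property then yields a (unique) factorization $k\colon S\to \nexttime T$ with $\nexttime n \cdot k = m$, witnessing $m\leq \nexttime n$.

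For the converse $m \leq \nexttime n \implies \pasttime m \leq n$, I would assume $k\colon S \to \nexttime T$ with $\nexttime n\cdot k = m$ and compute
\[
  f\cdot m \;=\; f\cdot \nexttime n\cdot k \;=\; Fn \cdot f[n]\cdot k,
\]
using the pullback square of \autoref{D:nexttime}. Thus the triple $(T,\, f[n]\cdot k,\, n)$ competes with the bound $(\pasttime S, g, \pasttime m)$ of $f\cdot m$, so minimality of the bound (\autoref{def:leastfactor}) gives $\pasttime m \leq n$. The only subtlety I anticipate is bookkeeping around the fact that $F$ preserves $\M$-morphisms, which is needed to guarantee that $Fn$ and $F(\pasttime m)$ are indeed in $\M$ so that the relevant pullback/bound diagrams live in $\sub$; this is built into \autoref{ass:main:F}.

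The ``consequently'' clause is then immediate from general order-theoretic adjunction facts: any left adjoint between posets preserves all existing joins (hence unions, which exist by \autoref{R:ass}\ref{R:ass:1}) and is in particular monotone.
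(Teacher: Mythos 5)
Your proof is correct and follows essentially the same route as the paper: both directions of the Galois connection $\pasttime m \leq n \iff m \leq \nexttime n$ are verified exactly as in the paper's proof, using minimality of the bound for one implication and the universal property of the pullback defining $\nexttime$ for the other, with the order-theoretic fact that left adjoints preserve joins handling the final clause.
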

\begin{proof}
  Let $f\colon X \to FY$ be any morphism and assume that
  $m \leq \nexttime m'$ for some subobjects $m\colon S \monoto X$ and
  $m'\colon S' \monoto Y$. Then we have a commutative diagram
  \[
    \begin{tikzcd}[sep = 10mm]
      S \arrow[>->]{r}{s} \arrow[>->,swap]{rd}{m}
      &
      \nexttime S' \arrow[>->]{d}{\nexttime m'} \arrow{r}{f[m']}
      \pullbackangle{-45}
      &
      FS'
      \arrow[>->]{d}{Fm'}
      \\
      &
      X \arrow{r}{f}
      &
      FY
    \end{tikzcd}
  \]
  and therefore $(S', f[m']\cdot s, m')$ is competing with the bound of
  $f \cdot m$. Thus, $\pasttime m \leq m'$. Conversely, suppose that
  $\pasttime m \leq m'$, witnessed by
  $j\colon \pasttime S \monoto S'$. Then consider the following
  diagram, where $g\colon S \to F(\pasttime S)$ comes from the bound
  of $f \cdot m$:
  \[
    \begin{tikzcd}
      S
      \arrow[>->, dashed]{d}
      \arrow{r}{g}
      \arrow[>->,shiftarr={xshift=-10mm},swap]{dd}{m}
      &
      F(\pasttime S)
      \arrow[>->]{d}{Fj}
      \arrow[>->,shiftarr={xshift=13mm}]{dd}{F(\pasttime m)}
      \\
      \nexttime S'
      \arrow{r}{f[m']}
      \arrow[>->,swap]{d}{\nexttime m'}
      \pullbackangle{-45}
      &
      FS'
      \arrow[>->]{d}{Fm'}
      \\
      X \arrow{r}{f} & FY
    \end{tikzcd}
  \]
  Since its outside and its right-hand part commute, we obtain the
  dashed arrow using the universal property of the pullback forming
  $\nexttime m'$. This proves that $m \leq \nexttime m'$ as desired.
\end{proof}
\begin{remark}
  For a coalgebra $c\colon C \to FC$, the operators $\nexttime$ and
  $\pasttime$ on $\sub(C)$ are a generalized semantic counterpart of
  the next time and previous time operators, respectively, of
  classical linear temporal logic, see e.g.~Manna and
  Pn\"ueli~\cite{MP92}. In fact, consider the functor
  $FX = \pow(A \times X)$ on $\Set$ whose coalgebras are labelled
  transition systems. For a transition system
  $c\colon C \to \pow(A \times C)$ and a subset $m\colon S \subto C$
  we have
  \begin{align*}
    \nexttime S & = \big\{x \in C \mid \text{for all $(a,s) \in c(x)$, $s \in S$}\big\},
    \\ 
    \pasttime S & = \big\{x \in C \mid \text{$(a,x) \in c(s)$ for some $a
      \in A$ and $s \in S$}\big\}.
  \end{align*}
\end{remark}
\begin{proposition}\label{P:ptcan}
  For an intersection preserving set functor, $\pasttime$
  may be computed on the canonical graph of a given coalgebra.
\end{proposition}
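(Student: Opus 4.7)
I plan to use Gumm's sub-cartesian transformation $\tau\colon F \to \pow$ from Lemma~\ref{L:Gumm} to show that, for every subobject $m\colon S \subto C$, the bound of $c \cdot m$ w.r.t.~$F$ coincides, as a subobject of $C$, with the bound of $\tau_C \cdot c \cdot m$ w.r.t.~$\pow$. Since $\pow$ itself preserves intersections, Proposition~\ref{P:leastfact} guarantees that both bounds exist and that $\pasttime_c$ and $\pasttime_{\tau_C \cdot c}$ are well-defined operators on $\sub(C)$.

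First I would fix a subobject $m\colon S \subto C$ and unfold: $\pasttime_c(m)$ is the least subobject $n\colon Z \subto C$ such that $c \cdot m$ factorizes through $Fn$, and $\pasttime_{\tau_C \cdot c}(m)$ is the least such $n$ for which $\tau_C \cdot c \cdot m$ factorizes through $\pow n$. The crux is to prove that, for a fixed $n$, these two factorization conditions are equivalent. The forward direction is immediate from the commutativity of the square~\eqref{diag:subcar} in Lemma~\ref{L:Gumm}: if $c \cdot m = Fn \cdot f'$, then postcomposing with $\tau_C$ yields $\tau_C \cdot c \cdot m = \pow n \cdot \tau_Z \cdot f'$. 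The reverse direction uses the pullback property of \eqref{diag:subcar}: given a factorization $\tau_C \cdot c \cdot m = \pow n \cdot g'$, the pair consisting of $c \cdot m\colon S \to FC$ and $g'\colon S \to \pow Z$ forms a compatible cone for the pullback square, hence induces a unique $f'\colon S \to FZ$ with $Fn \cdot f' = c \cdot m$ (and $\tau_Z \cdot f' = g'$).

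Since the class of subobjects $n$ through which $c \cdot m$ factorizes (via $Fn$) equals the class of subobjects through which $\tau_C \cdot c \cdot m$ factorizes (via $\pow n$), their least elements in $\sub(C)$ agree, yielding the desired equality $\pasttime_c(m) = \pasttime_{\tau_C \cdot c}(m)$. I do not anticipate a serious obstacle here: the entire argument reduces cleanly to the pullback property of $\tau$ established in Lemma~\ref{L:Gumm}, and the key sub-cartesian square does all the work of converting between $F$-factorizations and $\pow$-factorizations.
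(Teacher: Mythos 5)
Your proof is correct, but it takes a different route from the paper's. You work directly with the defining universal property of $\pasttime$: you show that the class of subobjects $n$ of $C$ through which $c\cdot m$ factorizes via $Fn$ coincides with the class through which $\tau_C\cdot c\cdot m$ factorizes via $\pow n$, using commutativity of the sub-cartesian square~\eqref{diag:subcar} for one inclusion and its pullback property for the other; equality of the least elements then gives $\pasttime_c = \pasttime_{\tau_C\cdot c}$. The paper instead exploits the adjunction $\pasttime_f \dashv \nexttime_f$ of \autoref{P:adj}: it pastes the pullback defining $\nexttime_c m$ with the sub-cartesian pullback to conclude $\nexttime_c = \nexttime_{\tau_C\cdot c}$, and then invokes uniqueness of left adjoints. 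Your argument is more elementary in that it does not rely on \autoref{P:adj} at all (only on the existence of bounds from \autoref{P:leastfact}, which you correctly justify for both $F$ and $\pow$); the paper's argument is shorter given that the adjunction has already been established, and it localizes all pullback reasoning into a single pasting. Both hinge on exactly the same key input, namely that~\eqref{diag:subcar} is a pullback, so the difference is one of packaging rather than substance; your version is a fully valid alternative.
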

\takeout{
\begin{proof}
  Suppose that $F\colon \Set \to \Set$ (and by \autoref{ass:main:F}, $F$
  preserves intersections).  Let $c\colon C \to FC$ be a coalgebra and
  $s\colon S \subto C$ be any subset of states. We will show that
  $\pasttime s$ computed w.r.t.~$(C,c)$ or its canonical graph
  $(C,\tau_C \cdot c)$ agree.  Indeed, this follows from the fact that
  any triple $(Z, g, m)$ competing to the bound of $c\cdot s$ yields
  the triple $(Z, \tau_C \cdot g, m)$ competing to the bound of
  $\tau_C \cdot c \cdot s$, and moreoever, every triple $(Z, g', m)$
  competing to the latter bound must satisfy $g' = \tau_C \cdot g$
  where $(Z, g, m)$ competes to the former bound:
  \[
    \begin{tikzcd}
      S
      \arrow[>->,swap]{d}{s}
      \arrow[dashed]{r}{g}
      \arrow[shiftarr={yshift=5mm}]{rr}{g'}
      &
      FZ
      \arrow{r}{\tau_Z}
      \arrow[>->,swap]{d}{Fm}
      \pullbackangle{-45}
      &
      \pow Z
      \arrow{d}{\pow m}
      \\
      C \arrow{r}{c}
      &
      FC \arrow{r}{\tau_C}
      &
      \pow C
    \end{tikzcd}
  \]
  In fact, one uses that~\eqref{diag:subcar} is a pullback. Now if
  $(Z, g, m)$ is the bound of $c \cdot s$ then
  $(Z, \tau_Z \cdot g, m)$ is the bound of $\tau_C \cdot c \cdot s$,
  for if $(\hat Z, \hat g', \hat m)$ is any competing triple
  (w.r.t.~$\pow$) we see that $\hat g' = \tau_{\hat Z} \cdot \hat g$
  such that $(\hat Z, \hat g, \hat m)$ is a competing triple
  (w.r.t.~$F$), and thus $m \leq \hat m$. Conversely, if $(Z, g', m)$
  is the bound of $\tau_C \cdot m \cdot s$, then
  $g' = \tau_Z \cdot g$ so that $(Z, g, m)$ is the bound of
  $c \cdot s$, for if $(\hat Z, \hat g, \hat m)$ is a competing triple
  (w.r.t.~$F$), then $(Z, \tau_{\hat Z}\cdot \hat g, \hat m)$ is a
  competing triple (w.r.t.~$\pow$), and thus $m \leq \hat m$.
\end{proof}}
\begin{proof}
  Suppose $F\colon \Set \to \Set$, which preserves intersections by
  \autoref{ass:main:F}. By \autoref{L:Gumm} we have the sub-cartesian
  transformation $\tau_X\colon FX\to \pow X$ as defined
  in~\eqref{eq:tau}. Given a coalgebra $c\colon C \to FC$, we need to
  prove that $\pasttime_c = \pasttime_{\tau_C\cdot c}$. We know that
  $\pasttime_{c}$ is left-adjoint to $\nexttime_c$ by
  \autoref{P:adj}. Similarly, $\pasttime_{\tau_C\cdot c}$ is
  left-adjoint to $\nexttime_{\tau_C\cdot c}$, since $\pow$ 
  preserves intersections. Hence, it suffices to show that
  $\nexttime_c = \nexttime_{\tau_C\cdot c}\colon \sub(C)\to
  \sub(C)$. Indeed, for every $m\colon S\monoto C$ we have that the
  following composition of two pullbacks
  \[
    \begin{tikzcd}
      \nexttime_cS
      \arrow[>->,swap]{d}{\nexttime m}
      \arrow{r}{c[m]}
      \pullbackangle{-45}
      &
      FS
      \arrow{r}{\tau_S}
      \arrow[>->,swap]{d}{Fm}
      \pullbackangle{-45}
      &
      \pow S
      \arrow[>->]{d}{\pow m}
      \\
      C \arrow{r}{c}
      &
      FC \arrow{r}{\tau_C}
      &
      \pow C
    \end{tikzcd}
  \]
  which is again a pullback diagram. Thus
  $\nexttime m\colon \nexttime_cS \monoto C$ is the pullback of
  $\pow m$ along $\tau_C\cdot c$,
  i.e.~$\nexttime_c = \nexttime_{\tau_C\cdot c}$.
\end{proof}
\begin{remark}\label{R:adjrel}
  In connection with reachable coalgebras, the operator $\pasttime$
  was recently used in the work of Barlocco, Kupke, and
  Rot~\cite{BarloccoEA19}. Their results were obtained independently
  from ours but almost at the same time. They work with a complete and
  well-powered category $\C$, so $\M$ is the class of all
  monomorphisms (cf.~\autoref{E:ass}\ref{E:ass:1}). First they show
  that every intersection preserving endofunctor $F$ on $\C$ has least
  bounds (i.e.~the implication (1) $\implies$ (2) in
  \autoref{P:leastfact}).

  Furthermore, it is easy to see that, for every $F$-coalgebra
  $(C,c)$, $\pasttime$ is a monotone operator on $\sub(C)$. In
  addition, we see that $\pasttime$ preserves all unions. Indeed,
  in the setting of a complete and well-powered category, every
  $\sub(C)$ is a complete lattice having all intersections. Thus (the
  proof of) \autoref{P:leastfact} shows that $\nexttime$ has the
  left-adjoint $\pasttime$.

  Moreover, it is shown in op.~cit.~that for every point
  $i_0\colon 1 \to C$ the reachable part of $(C,c,i_0)$ is the least
  fixed point of $i_0 \vee \pasttime(-)$.

  However, note that the assumption of completeness may be limiting
  applications, e.g.~the category $\Rel$ in
  \autoref{E:ass}\ref{E:ass:2} is not complete.
\end{remark}
Barlocco et al.~\cite{BarloccoEA19} also prove the following fact. The
proof is the same in our setting, and we present it here for the
convenience of the reader.
\begin{proposition}
  Let $c\colon C \to FC$ be a coalgebra and $m\colon S \monoto C$ a
  subobject. Then $S$ carries a subcoalgebra of $(C,c)$ if and only if
  $m$ is a prefixed point of $\pasttime$. 
\end{proposition}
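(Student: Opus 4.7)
The plan is to translate the prefixed point condition through the adjunction \(\pasttime \dashv \nexttime\) established in \autoref{P:adj}, and then to recognize the resulting inequality as the universal property defining a subcoalgebra structure on \(S\). Since \(\pasttime\) is left-adjoint to \(\nexttime\) (on \(\sub(C)\), for the morphism \(c\colon C \to FC\)), the statement \(\pasttime m \leq m\) is equivalent to \(m \leq \nexttime m\). So the task reduces to showing that the latter inequality is equivalent to \(S\) carrying a subcoalgebra of \((C,c)\) via \(m\).

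Unpacking \autoref{D:nexttime}, the subobject \(\nexttime m\colon \nexttime S \monoto C\) is defined as the pullback of \(Fm\) along \(c\), with pullback projection \(c[m]\colon \nexttime S \to FS\). Hence \(m \leq \nexttime m\) holds precisely when there exists a morphism \(t\colon S \to \nexttime S\) in \(\C\) with \(\nexttime m \cdot t = m\); this factorization is unique since \(\nexttime m\) is monic. Given such a \(t\), I put \(s := c[m]\cdot t\colon S \to FS\) and compute
\[
  Fm \cdot s = Fm \cdot c[m] \cdot t = c \cdot \nexttime m \cdot t = c \cdot m,
\]
so \(m\colon (S,s) \monoto (C,c)\) is a coalgebra homomorphism, exhibiting \(S\) as a subcoalgebra of \((C,c)\).

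Conversely, if \(s\colon S \to FS\) is any coalgebra structure making \(m\) a homomorphism, then \(Fm \cdot s = c \cdot m\), so the pair \((s, m)\) is a competing cone to the pullback square defining \(\nexttime m\). The universal property of the pullback yields a (unique) factorization \(t\colon S \to \nexttime S\) with \(\nexttime m \cdot t = m\), which is exactly the witness for \(m \leq \nexttime m\). Combining the two directions with the adjunction equivalence \(\pasttime m \leq m \iff m \leq \nexttime m\) completes the proof. I do not foresee any obstacle here: the argument is a direct application of the adjunction \autoref{P:adj} together with the defining pullback of \(\nexttime\), and no further hypothesis on \(F\) (beyond \autoref{ass:main:F}) is required.
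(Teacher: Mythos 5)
Your proof is correct, but it takes a different route from the paper's. The paper argues directly with the \emph{bound} characterization of $\pasttime$: given $\pasttime m \leq m$ witnessed by $i\colon \pasttime S \monoto S$, it equips $S$ with the structure $Fi\cdot g$, where $g\colon S \to F(\pasttime S)$ is the factorization coming with the bound of $c\cdot m$, and conversely observes that any subcoalgebra structure $(S,s,m)$ is a triple competing with that bound, so minimality gives $\pasttime m \leq m$. You instead pass through the adjunction $\pasttime \dashv \nexttime$ of \autoref{P:adj} to reduce the claim to the equivalence ``$S$ carries a subcoalgebra iff $m \leq \nexttime m$,'' which you then verify from the pullback defining $\nexttime$ in \autoref{D:nexttime}. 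Both arguments are sound and of comparable length; yours isolates the classical ``invariant = prefixed point of next time'' characterization (in the spirit of Jacobs) and treats the adjunction as a black box, at the cost of invoking \autoref{P:adj} (whose proof itself unfolds both the bound and the pullback descriptions), whereas the paper's version is self-contained given only the definition of $\pasttime$ via least bounds and never needs the pullback. One point worth making explicit in your write-up: the existence of the pullback defining $\nexttime m$ and the fact that $\nexttime m \in \M$ rely on $F$ preserving $\M$-morphisms and on the preimages constructed in \autoref{R:ass}, all of which are indeed guaranteed under \autoref{ass:main:F}, so no hypotheses are missing.
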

\begin{proof}
  Suppose first that we have $\pasttime m \leq m$, i.e.~we have some
  $i\colon \pasttime S \monoto S$ such that $m \cdot i = \pasttime
  m$. Then the following diagram commutes:
  \[
    \begin{tikzcd}
      S
      \arrow[>->,swap]{d}{m}
      \arrow{r}{g}
      &
      F(\pasttime S)
      \arrow[>->]{r}{Fi}
      \arrow[>->,swap]{rd}{F(\pasttime m)}
      &
      FS
      \arrow[>->]{d}{Fm}
      \\
      C \arrow[swap]{rr}{c} & & FC
    \end{tikzcd}
  \]
  This shows that $(S, Fi \cdot g)$ is a subcoalgebra of $(C,c)$. 

  Conversely, suppose that $m\colon (S,s) \monoto (C,c)$ is a
  subcoalgebra. Then $(S, s, m)$ competes with the bound $(\pasttime
  S, g, \pasttime m)$ of $c \cdot m$ and therefore we have $\pasttime
  m \leq m$. 
\end{proof}
We now present our new construction of the reachable part of an
$I$-pointed coalgebra as the union of all iterated applications of
$\pasttime$ on the given $I$-pointing.

\begin{construction} \label{constr}
  Given an $I$-pointed $F$-coalgebra
  $I\xrightarrow{i_C} C\xrightarrow{c} FC$, define subobjects
  $m_k\colon C_k\rightarrowtail C$, $k\in \N$, inductively:
  \begin{enumerate}
  \item Let $C_0$ be the $(\E,\M)$-factorization of $i_C$:
    \begin{equation}
      \begin{tikzcd}
        I
        \arrow[->>]{r}{i_C'}
        \arrow[shiftarr={yshift=6mm}]{rr}{i_C}
        & C_0
        \arrow[>->]{r}{m_0}
        & C.
      \end{tikzcd}
      \label{eq:constr:base}
    \end{equation}

  \item Given $m_k\colon C_k\to C$, let $m_{k+1} = \pasttime m_k\colon
    C_{k+1} = \pasttime C_k \rightarrowtail C$, i.e.~$m_{k+1}$ is
    given by the bound of  $c\cdot  m_k$:
    \begin{equation}\label{eq:constr:step}
      \begin{tikzcd}
        C_k
        \arrow{r}{c_k}
        \arrow[>->,swap]{d}{m_k}
        & FC_{k+1}
        \arrow[>->]{d}{Fm_{k+1}}
        \\
        C \arrow{r}{c}
        & FC
      \end{tikzcd}
    \end{equation}
  \end{enumerate}
  We define the subobject $m\colon R \monoto C$ to be the union of all
  $m_k$, $k \in \N$:
  \[
    m := \bigcup_{k \in \N} \pasttime^k(m_0).
  \]
\end{construction}
\begin{theorem}\label{constrCorrect}
  For every $I$-pointed coalgebra $(C,c,i_C)$, the union
  $m\colon R \monoto C$ is a reachable subcoalgebra of
  $(C,c,i_C)$.\jdnote{The statement is not complete \dots SM: better now?} 
\end{theorem}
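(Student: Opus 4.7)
The plan is to verify three properties of $m\colon R\monoto C$: that it carries a subcoalgebra, that the $I$-pointing $i_C$ factors through $m$, and that the resulting $I$-pointed coalgebra is reachable.

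First I would establish the subcoalgebra property. By the proposition just before the construction, $m$ carries a subcoalgebra of $(C,c)$ if and only if $\pasttime m \leq m$. Since $\pasttime$ is a left-adjoint to $\nexttime$ by \autoref{P:adj}, it preserves all unions. Therefore
\[
  \pasttime m
  = \pasttime \bigcup_{k \in \N}\pasttime^k(m_0)
  = \bigcup_{k \in \N}\pasttime^{k+1}(m_0)
  \leq \bigcup_{k \in \N}\pasttime^k(m_0)
  = m,
\]
so $m$ is a prefixed point of $\pasttime$, and hence $R$ carries a subcoalgebra $r\colon R\to FR$ making $m$ a coalgebra homomorphism. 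The pointing is immediate: since $m_0\leq m$ there is a (unique) morphism $i_R\colon I\to R$ with $m\cdot i_R = i_C$, giving the $I$-pointing on $R$.

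The real work is the reachability of $(R,r,i_R)$. Suppose we have an $I$-pointed subcoalgebra $n\colon (S,s,i_S)\monoto (R,r,i_R)$ with $n\in \M$. Consider the composite $t := m\cdot n\colon S\monoto C$; since $\M$ is closed under composition, $t\in \M$, and as a composition of subcoalgebra homomorphisms it represents an $I$-pointed subcoalgebra of $(C,c,i_C)$, with $t\cdot i_S = m\cdot i_R = i_C$. By the proposition characterizing subcoalgebras as prefixed points, $\pasttime t\leq t$.

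The key claim is then $m_k \leq t$ in $\sub(C)$ for every $k\in \N$, proved by induction. For $k=0$, we have $m_0\cdot i_C' = i_C = t\cdot i_S$ with $i_C'\in \E$ and $t\in \M$, so the unique diagonal fill-in yields $d\colon C_0\to S$ with $t\cdot d = m_0$, hence $m_0\leq t$. For the inductive step, monotonicity of $\pasttime$ (from \autoref{P:adj}) together with $\pasttime t\leq t$ gives $m_{k+1} = \pasttime m_k \leq \pasttime t\leq t$. Taking the union over all $k$ yields $m\leq t = m\cdot n$, so there exists $h\colon R\to S$ with $m\cdot n\cdot h = m$; since $m$ is monic, $n\cdot h = \id_R$. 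Thus $n$ is a split epimorphism as well as an $\M$-morphism, and hence an isomorphism, proving reachability.

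I do not anticipate a substantial obstacle: the argument hinges on two earlier facts, namely that $\pasttime$ is a union-preserving left-adjoint and that subcoalgebras are exactly the prefixed points of $\pasttime$. The only mildly delicate point is the base case of the induction, which requires the diagonal fill-in property to pass from a factorization of $i_C$ through $t$ to a factorization of $m_0$ through $t$.
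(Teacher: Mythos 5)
Your proof is correct. The reachability argument is essentially the paper's: the paper constructs morphisms $d_k\colon C_k\monoto S$ by induction, with the base case handled by diagonal fill-in and the inductive step by the competing-bound property of $m_{k+1}=\pasttime m_k$; your induction $m_k\leq t$ with $t=m\cdot n$ is the same argument phrased through the operator $\pasttime$ and the prefixed-point inequality $\pasttime t\leq t$, and both conclude by splitting the inclusion via monicity of $m$. Where you genuinely diverge is in establishing that $R$ carries a coalgebra structure. The paper does this by hand: it assembles a coalgebra structure on $\coprod_{k}C_k$ from the squares~\eqref{eq:constr:step}, checks that $[m_k]_{k\in\N}$ is a homomorphism using joint epimorphicity of the injections, and then transports the structure along the $(\E,\M)$-factorization as in \autoref{rem:subEM}\ref{rem:subEM:2}. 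You instead invoke the proposition characterizing subcoalgebras as prefixed points of $\pasttime$ together with the fact (\autoref{P:adj}) that $\pasttime$, being a left adjoint, preserves unions, so that $\pasttime m=\bigcup_k\pasttime^{k+1}(m_0)\leq m$. This is shorter and cleaner, at the cost of leaning on the Barlocco--Kupke--Rot proposition (which the paper does prove just before the construction, so it is available); the paper's explicit construction has the minor advantage of exhibiting the coalgebra structure $r$ on $R$ concretely via the coproduct. One small point worth making explicit in your write-up: for the pointing you should note that $i_R$ is obtained as $j\cdot i_C'$ where $j\colon C_0\to R$ witnesses $m_0\leq m$, and that $m\cdot i_R=i_C$ then follows from $m\cdot j=m_0$; as stated, "there is a unique morphism $i_R$ with $m\cdot i_R=i_C$" presumes rather than proves that $i_C$ factors through $m$.
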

\begin{proof}
  We will prove that $R$ carries the structure of an $I$-pointed
  coalgebra, which is reachable, and that $m$ is a homomorphism of
  $I$-pointed coalgebras.

  Recall from \autoref{R:ass}\ref{R:ass:1} that the union $m$ is
  formed by taking the $(\E,\M)$-factorization $m \cdot e = [m_k]_{k
    \in \N}\colon \coprod_{k \in \N} C_k \to C$, and let us denote the coproduct
  injections by $\inj_k\colon C_k \to \coprod_{k \in \N} C_k$.  
  
  By~\autoref{rem:EM}\ref{rem:EM:3},
  $e\cdot \inj_k\colon C_k\rightarrowtail R$ is in $\M$ for all
  $k \in \N$, because $m_k = m\cdot e \cdot \inj_k$ and $m$ is in $\M$.

  \begin{enumerate}
  \item The pointing $i_R := e\cdot \inj_0\cdot i_{C}'\colon I\to
    R$ is preserved by $m$ because the following diagram commutes
    \smnote{@Thorsten: can the arrow $i_C$ de set so that it leaves
      $I$ downwards and enters $C$ from the left with a 90 degree
      curve below $I$ and on the left of $C$? TW: Sure.} 
  \[
    \begin{tikzcd}
      I \arrow[->>]{r}{i_C'}
      \arrow[bend right=20,
      rounded corners,
      to path={ |- (\tikztotarget) \tikztonodes}
      ]{rrrd}[pos=0.75]{i_C}
      \arrow[shiftarr={yshift=5mm}]{rrr}{i_R}
      &
      C_0 \arrow{r}{\inj_0}\arrow[bend right=5]{rrd}[swap]{m_0}
      &
      \coprod_{k \in \N} C_k
      \arrow[->>]{r}{e} \arrow{rd}[description]{[m_k]_{k\in \N}}
      &
      R
      \arrow[>->]{d}{m}
      \\
      &&&
      C
    \end{tikzcd}
  \]

  For the coalgebra structure on $R$, first note that the
  outside of the following diagram commutes for every $\ell \in \N$ using
  \eqref{eq:constr:step}:
  \[
    \begin{tikzcd}[row sep=8mm, column sep=15mm]
      C_\ell
      \arrow{d}{\inj_{\ell}}
      \arrow[shiftarr={xshift=-10mm},>->]{dd}[swap]{m_{\ell}}
      \arrow{rr}{c_\ell}
      & & FC_{\ell+1}
      \arrow[shiftarr={xshift=10mm},>->]{dd}{Fm_{\ell+1}}
      \arrow{d}[swap]{F\inj_{\ell+1}}
      \\
      \coprod_{k\in \N} C_k
      \arrow{r}{\coprod_{k\in \N}c_k}
      \arrow{d}{[m_{k}]_{k\in \N}}
      & \coprod_{k \geq 1} FC_k
      \arrow{r}{[F\inj_{k+1}]_{k\in \N}}
      & F \coprod_{k\in \N} C_k
      \arrow{d}[swap]{F[m_{k}]_{k\in \N}}
      \\
      C
      \arrow{rr}{c}
      & & FC
    \end{tikzcd}
  \]
  Since the coproduct injections $\inj_\ell$ form a jointly epimorphic
  family, it follows that the lower part commutes, i.e.~we have
  obtained a coalgebra structure on $\coprod_{k \in \N} C_k$ such that
  $[m_k]_{k \in \N}$ is a coalgebra homomomorphism. Since
  $m\colon R\monoto C$ is obtained by the $(\E,\M)$-factorization
  $m \cdot e = [m_k]_{k \in \N}$, we also obtain a coalgebra structure
  $r\colon R \to FR$ such that $e$ and $m$ are $F$-coalgebra
  homomorphisms, using that $F$ preserves $\M$-morphisms and the
  unique diagonal fill-in property
  (cf.~\autoref{rem:subEM}\ref{rem:subEM:2}).

  \item For reachability, let $h\colon (S,s,i_S)
  \rightarrowtail (R, r, i_R)$ with $h\in \M$ be a pointed subcoalgebra.
  In the following we will define morphisms $d_k\colon C_k\rightarrowtail S$ (in
  $\M$) satisfying
  \begin{equation}\label{eq:dk}
    \begin{tikzcd}
      C_k
      \arrow[>->]{r}{m_k}
      \arrow[>->]{d}[swap]{d_k}
      & C
      \\
      S
      \arrow[>->]{r}{h}
      & R
      \arrow[>->]{u}[swap]{m}
    \end{tikzcd}
    \qquad\text{for all }k\in \N.
  \end{equation}
  We define $d_0\colon C_0\rightarrowtail S$ using the diagonal fill-in
  property; in fact, in the diagram below the outside commutes since $m_0
  \cdot i_{C'} = i_C$ is the pointing of $C$ and $m \cdot h$ preserves pointings:
  \[
    \begin{tikzcd}[column sep=6mm,row sep=12mm]
      I
      \arrow[->>]{rr}{i_C'}
      \arrow{d}[swap]{i_S}
      & & C_0
      \arrow[>->]{d}{m_0}
      \arrow[>->,dashed]{dll}[description]{d_0}
      \\
      S \arrow[>->]{r}{h}
      & R \arrow[>->]{r}{m}
      & C
    \end{tikzcd}
  \]
  Given $d_k\colon C_k\rightarrowtail S$, note that the following diagram
  commutes:
  \[
    \begin{tikzcd}
      C_k
      \arrow[>->]{r}{d_k}
      \arrow[>->]{dd}[swap]{m_k}
      &
      S \arrow{r}{s} \arrow[>->]{d}[swap]{h}
      &
      FS
      \arrow[>->]{d}{Fh}
      \arrow[>->,shiftarr={xshift=7mm}]{dd}{F(m \cdot h)}
      \\
      &
      R
      \arrow[>->]{ld}[swap]{m}
      \arrow{r}{r}
      &
      FR
      \arrow[>->]{d}{Fm}
      \\
      C \arrow{rr}{c} && FC
    \end{tikzcd}
  \]
  The commutativity of its outside means that
  $(S, s \cdot d_k, m \cdot h)$ competes with the bound
  $m_{k+1}\colon C_{k+1} \monoto C$ of $c \cdot m_k$. Thus, there
  exists a morphism $d_{k+1}\colon C_{k+1}\monoto S$ with
  $m\cdot h\cdot d_{k+1} = m_{k+1}$.

  Putting all squares~\eqref{eq:dk} together, we see that the diagram
  on the left below commutes: 
  \[
    \begin{tikzcd}[column sep = 14mm]
      \coprod_{k\in \N}C_k
      \arrow[bend right=10,shift right=2]{rr}[swap]{[m_k]_{k\in \N}}
      \arrow{d}[swap]{[d_k]_{k\in \N}}
      \arrow[->>]{r}{e}
      & R
      \arrow[>->]{r}{m}
      & C
      \\
      S
      \arrow[>->]{rr}{h}
      & & R
      \arrow[>->]{u}[swap]{m}
    \end{tikzcd}
    \overset{m\text{ monic }}{
    \Rightarrow
    }
    \begin{tikzcd}[column sep = 14mm]
      \coprod_{k\in \N}C_k
      \arrow{d}[swap]{[d_k]_{k\in \N}}
      \arrow[->>]{r}{e}
      & R
      \arrow[equals]{d}
      \arrow[dashed]{dl}[description]{\exists! d}
      \\
      S
      \arrow[>->]{r}{h}
      & R
    \end{tikzcd}
  \]
  Since $m\in \M$ is monomorphic the outside of the diagram on the
  right above commutes, and we apply the diagonal fill-in property
  again to see that $h$ is a split epimorphism. Since we also
  know that $h\in \M$ is a monomorphism, it is an isomorphism, whence
  $(R, r, i_R)$ is reachable as desired.\qedhere
\end{enumerate}
\end{proof}
\begin{definition}
  We call the above $I$-pointed coalgebra $(R,r,i_R)$ the
  \emph{reachable part} of $(C,c,i_C)$.
\end{definition}
\begin{remark}
  Note that it follows from an easy lattice theoretic argument that
  for every join-preserving map $\varphi\colon L \to L$ on a complete
  lattice $L$, and every $\ell \in L$ the least fixed point of
  $\ell \vee \varphi(-)$ is given by the join
  \begin{equation}\label{eq:join}
    \bigvee_{i \in \N} \varphi^i(\ell).
  \end{equation}
  Indeed, to see this recall that, by Kleene's fixed point theorem,
  the least fixed point of $\ell \vee \varphi(-)$ is the join
  of the $\omega$-chain given by $x_0 = \bot$, the least element of
  $L$, and $x_{n+1} = \ell \vee \varphi(x_n)$. We will show by
  induction that
  \[
    x_n = \bigvee_{i < n} \varphi^i(\ell)
    \qquad
    \text{for all $n < \omega$.}
  \]
  The base case $n=0$ is clear since the empty join is $\bot$, and for the
  induction step we use that $\varphi$ preserves joins to compute:
  \[
    x_{n+1}
    =
    \ell \vee \varphi(x_n)
    =
    \ell \vee \varphi\big(\bigvee_{i < n} \varphi^i(\ell)\big)
    =
    \ell \vee \bigvee_{i < n} \varphi^{i+1}(\ell)
    =
    \bigvee_{i < n+1} \varphi^i(\ell).
  \]
  
  Applying this to $\varphi = \pasttime$ we see that our
  \autoref{constr} of the reachable part coincides with the least
  fixed point of $i_0 \vee \pasttime(-)$ considered by Barlocco et
  al.~(cf.~\autoref{R:adjrel}).
\end{remark}

If we additionally assume that $F$ preserves inverse images, then the
reachability construction enjoys further strong properties:
\begin{theorem}\label{thm:coreflect}
  Suppose that $F$ preserves inverse images. Then the full subcategory
  of $\Coalg_I(F)$ given by all reachable coalgebras is coreflective.
\end{theorem}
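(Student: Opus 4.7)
The plan is to show that the reachable part $(R,r,i_R)$ from \autoref{constr}, together with the inclusion $m\colon R\monoto C$, is the coreflection of $(C,c,i_C)$ into the full subcategory of reachable $I$-pointed $F$-coalgebras. By \autoref{constrCorrect}, $(R,r,i_R)$ is already known to be reachable, and $m$ is a homomorphism of $I$-pointed coalgebras. So the whole task reduces to verifying the universal property: for every reachable $(S,s,i_S)$ and every homomorphism $h\colon (S,s,i_S)\to (C,c,i_C)$, there is a unique homomorphism $\bar h\colon (S,s,i_S)\to(R,r,i_R)$ with $m\cdot\bar h = h$.

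For existence, I would form the preimage of $m$ under $h$ in $\C$, which exists by \autoref{R:ass}\,(3):
\[
  \begin{tikzcd}
    \tilde R \arrow{r}{\tilde m}\arrow[>->,swap]{d}{\tilde h}\pullbackangle{-45}
    & R \arrow[>->]{d}{m}\\
    S \arrow{r}{h} & C
  \end{tikzcd}
\]
Here $\tilde h\in\M$ because $\M$ is stable under pullbacks (\autoref{rem:EM}). The equations $h\cdot i_S = i_C = m\cdot i_R$ yield, by the pullback property, a pointing $i_{\tilde R}\colon I\to\tilde R$ with $\tilde h\cdot i_{\tilde R}=i_S$ and $\tilde m\cdot i_{\tilde R}=i_R$. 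The crucial step is to lift this to a subcoalgebra of $(S,s,i_S)$: applying $F$ and using that $F$ preserves inverse images, the square of $F$-images is again a pullback. Since
\[
  Fh\cdot s\cdot\tilde h \;=\; c\cdot h\cdot\tilde h \;=\; c\cdot m\cdot\tilde m \;=\; Fm\cdot r\cdot\tilde m,
\]
the pullback property induces a unique $\tilde r\colon\tilde R\to F\tilde R$ with $F\tilde h\cdot\tilde r = s\cdot\tilde h$ and $F\tilde m\cdot\tilde r = r\cdot\tilde m$, making $\tilde h$ and $\tilde m$ coalgebra homomorphisms. Thus $\tilde h\colon(\tilde R,\tilde r,i_{\tilde R})\monoto(S,s,i_S)$ is a pointed subcoalgebra. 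Reachability of $(S,s,i_S)$ now forces $\tilde h$ to be an isomorphism, and I take
\[
  \bar h \;:=\; \tilde m\cdot\tilde h^{-1}\colon S\longrightarrow R.
\]
Then $m\cdot\bar h = m\cdot\tilde m\cdot\tilde h^{-1} = h\cdot\tilde h\cdot\tilde h^{-1} = h$ and $\bar h\cdot i_S = \tilde m\cdot\tilde h^{-1}\cdot i_S = \tilde m\cdot i_{\tilde R} = i_R$, so $\bar h$ is a homomorphism of $I$-pointed coalgebras (it is a composite of two such). Uniqueness is immediate: any other $\bar h'$ with $m\cdot\bar h'=h$ must equal $\bar h$ because $m\in\M$ is monic.

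The main (in fact, only) conceptual obstacle is the construction of the coalgebra structure $\tilde r$ on the preimage $\tilde R$, and this is precisely where the additional hypothesis that $F$ preserves inverse images is used in an essential way. Once $\tilde R$ is a subcoalgebra of the reachable coalgebra $(S,s,i_S)$, reachability collapses it onto $S$, and the rest of the argument is formal manipulation with the pullback.
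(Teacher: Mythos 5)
Your proposal is correct and follows essentially the same route as the paper's proof: form the preimage of the reachable part $m\colon R\monoto C$ along $h$, use preservation of inverse images to equip the pullback with a pointed coalgebra structure making both projections homomorphisms, and invoke reachability of $(S,s,i_S)$ to conclude the projection into $S$ is an isomorphism. The only differences are notational, and your explicit verification of the pointing equation $\bar h\cdot i_S=i_R$ is a welcome (if routine) addition.
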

%
%
\begin{proof}
  Let $(C,c,i_C)$ be an $I$-pointed $F$-coalgebra and let,
  $m\colon (R,r,i_R) \rightarrowtail (C,c,i_C)$ be its reachable
  part. We will show that this is a coreflection by verifying the
  corresponding universal property.

  Suppose we have a homomorphism $h\colon (S,s,i_S)\to (C,c,i_C)$
  where $(S,s,i_S)$ is reachable. We need to show that $h$ factorizes
  uniquely through $m$. Uniqueness is clear since $m$ is
  monic. For existence, we form the inverse image of $m$ under $h$,
  i.e.~we form the following pullback:
  \begin{equation}\label{diag:ref}
    \begin{tikzcd}
      P 
      \arrow{r}{h'}
      \arrow[>->]{d}[swap]{m'}
      \pullbackangle{-45}
      & R
      \arrow[>->]{d}{m}
      \\
      S
      \arrow{r}[swap]{h}
      & C
    \end{tikzcd}
  \end{equation}
  We equip $P$ with an $I$-pointing and a coalgebra structure making
  $m'$ and $h'$ homomorphisms of $I$-pointed coalgebras. Indeed, since
  $m\cdot i_R = i_C = h\cdot i_S$, we obtain a pointing $i_P$ on $P$,
  and since $F$ preserves inverse images we also obtain a coalgebra
  structure~$p$:
  \[
    \begin{tikzcd}[column sep=7mm,row sep=7mm]
      & I
      \arrow[bend right]{ddl}[swap]{i_S}
      \arrow[bend left]{ddr}{i_R}
      \arrow[dashed]{d}{\exists!i_P}
      \arrow[shiftarr={xshift=-17mm}]{ddd}[swap]{i_C}
      \arrow[shiftarr={xshift=17mm}]{ddd}{i_C}
      \\
      & P
      \arrow[shorten <=2pt, >->]{dl}{m'}
      \arrow[shorten <=2pt,]{dr}[swap]{h'}
      \pullbackangle{-90}
      \\
      S
      \arrow{dr}{h}
      & & R
      \arrow[>->]{dl}[swap]{m}
      \\
      & C
    \end{tikzcd}
    \qquad
    \begin{tikzcd}[column sep=5mm,row sep=7mm]
      & P 
      \arrow[bend right=10,>->]{dl}[swap]{m'}
      \arrow[bend left=10]{dr}{h'}
      \arrow[dashed]{d}{\exists! p}
      \\
      S
      \arrow{d}[swap]{s}
      \arrow[shiftarr={xshift=-6mm}]{dd}[swap]{h}
      & FP
      \arrow[shorten <=1pt, >->]{dl}{Fm'}
      \arrow[shorten <=1pt]{dr}[swap]{Fh'}
      \pullbackangle{-90}
      & R\arrow{d}{r}
      \arrow[shiftarr={xshift=6mm},>->]{dd}{m}
      \\
      FS
      \arrow{dr}{Fh}
      & & FR
      \arrow[>->]{dl}[swap]{Fm}
      \\
      C
      \arrow{r}[swap]{c}
      & FC
      & C
      \arrow{l}{c}
    \end{tikzcd}
  \]
  Clearly, this definition of $(P,p,i_P)$ makes $m'$ and $h'$
  homomorphisms. Since $(S,s,i_S)$ is reachable and $m'$ is
  monomorphic, the latter must be an isomorphism. Thus,
  $h'\cdot (m')^{-1}$ is the desired factorization of $h$ through $m$,
  cf.~\eqref{diag:ref}.
\end{proof}
\begin{corollary}\label{C:quot}
  If $F$ preserves inverse images, then reachable
  $F$-coalgebras are closed under quotients.
\end{corollary}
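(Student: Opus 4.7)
The plan is to reduce the claim directly to \autoref{thm:coreflect}, using the fact that in our factorization setting a monomorphism that splits is an isomorphism.

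Let $e\colon (C,c,i_C) \epito (Q,q,i_Q)$ be a quotient of $I$-pointed coalgebras, where $(C,c,i_C)$ is reachable and $e \in \E$. Let $m\colon (R,r,i_R) \monoto (Q,q,i_Q)$ be the reachable part of $(Q,q,i_Q)$ as furnished by \autoref{constrCorrect}. Since $F$ preserves inverse images, \autoref{thm:coreflect} applies, so the homomorphism $e\colon (C,c,i_C) \to (Q,q,i_Q)$ factorizes through the coreflection, yielding a (unique) homomorphism $e'\colon (C,c,i_C) \to (R,r,i_R)$ with $m \cdot e' = e$.

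Now consider the commutative square
\[
  \begin{tikzcd}
    C \arrow{r}{e'} \arrow[->>,swap]{d}{e} & R \arrow[>->]{d}{m} \arrow[dashed,swap]{dl}{d}\\
    Q \arrow[equals]{r} & Q
  \end{tikzcd}
\]
Since $e \in \E$ and $m \in \M$, the unique diagonal fill-in property provides $d\colon Q \to R$ with $m \cdot d = \id_Q$. Then from $m \cdot (d \cdot m) = m \cdot \id_R$ and the fact that $m$ is a monomorphism we also deduce $d \cdot m = \id_R$. Hence $m$ is an isomorphism, and so $(Q,q,i_Q) \cong (R,r,i_R)$ is reachable.

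There is no real obstacle here; the whole content is that the reachable part exists as a coreflection, and the standard argument that a split monomorphism is an isomorphism then closes the gap.
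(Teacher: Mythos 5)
Your proof is correct and follows essentially the same route as the paper: factor the quotient map through the reachable part of $(Q,q,i_Q)$ via the coreflection of \autoref{thm:coreflect}, then use the unique diagonal fill-in to split the $\M$-morphism $m$ and conclude it is an isomorphism. The only difference is cosmetic (naming conventions and your explicit verification that $d\cdot m = \id_R$, which the paper leaves implicit).
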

\begin{proof}
  Given a reachable coalgebra $(R,r,i_R)$ and $e\colon
  (R,r,i_R)\twoheadrightarrow (Q,q,i_Q)$ in $\Coalg_I(F)$ carried by an
  $\E$-morphism. Suppose that $m\colon (Q',q',i_Q') \to (Q,q,i_Q)$, $m\in \M$, is the
  inclusion of the reachable part of $Q$. By \autoref{thm:coreflect}, $e$
  factorizes through~$m$:
  \[
    \begin{tikzcd}
      (Q',q',i_Q') \arrow[>->]{r}{m}
      &
      (Q,q,i_Q)
      \\
      (R,r,i_R) \arrow[->>]{ru}[swap]{e}
      \arrow[dashed]{u}{h}
    \end{tikzcd}
  \]
  \sloppypar
  Using the diagonal fill-in property, we obtain a
  unique homomorphism $d\colon (Q,q,i_Q) \to (Q',q',i_Q')$
  such that $h = d \cdot e$ and $m\cdot d = \id$.
  This implies that $m$ is a split epimorphism, and since $m\in \M$ is
  a monomorphism, it is an isomorphism. 
\end{proof}
\begin{example}
  For functors not preserving inverse images, reachable coalgebras
  need not be closed under quotients. For example, recall the functor
  $R\colon \Set \to \Set$ from \autoref{E:R} and consider the
  coalgebras $c\colon C \to RC$ with $C = \set{x,y,z}$ and $c(x) = (y,z)$
  and $c(y) = c(z) = *$ and $d\colon D \to RD$ with $D =
  \set{x',y'}$ and $d(x') = d(y') = *$. Then $(D,d)$ is a quotient of
  $(C,c)$ via the coalgebra homomorphism $q$ with $q(x) = x'$ and
  $q(y) = q(z) = y'$. However, $(C,c, x)$ is reachable whereas
  $(D,d,x')$ is not.

  Note that, in the light of the proof of \autoref{C:quot}, this
  example also shows that reachable $F$-coalgebras need not form a
  coreflective subcategory if $F$ does not preserves inverse images.
\end{example}

Finally, let us come back to $\C = \Set$ and canonical graphs. We call
the subset $m_k\colon C_k \subto C$ of \autoref{constr} the \emph{$\kth$ step}
of the construction of the reachable part. 

\begin{corollary}\label{C:set}
  Let $F\colon \Set \to \Set$ preserve intersections. Then the
  $\kth$ steps of the constructions of the reachable parts of an
  $I$-pointed coalgebra and its canonical graph are the same.
\end{corollary}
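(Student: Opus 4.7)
The plan is to proceed by a straightforward induction on $k$, using \autoref{P:ptcan} as the key ingredient. Recall that the construction defines $m_0$ purely from the $(\E,\M)$-factorization of the pointing $i_C\colon I \to C$, and then sets $m_{k+1} = \pasttime(m_k)$ where $\pasttime$ is taken with respect to the coalgebra structure. Since the canonical graph of $(C,c,i_C)$ retains the same carrier $C$ and the same pointing $i_C$, the base case $k=0$ is immediate: the subobject $m_0\colon C_0 \monoto C$ depends only on $i_C$, and so coincides for both the $F$-coalgebra $c$ and its canonical graph $\tau_C \cdot c$.

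For the inductive step, suppose the $k$th step subobjects agree, i.e.\ the same $m_k\colon C_k \monoto C$ is produced by applying the construction to $(C,c,i_C)$ and to its canonical graph $(C, \tau_C \cdot c, i_C)$. Then the $(k{+}1)$st steps are $\pasttime_c(m_k)$ and $\pasttime_{\tau_C \cdot c}(m_k)$, respectively. By \autoref{P:ptcan}, these two operators agree on $\sub(C)$, so the resulting subobjects $m_{k+1}$ coincide as well. This closes the induction and yields the corollary.

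There is no real obstacle here, since all the work has already been done in \autoref{P:ptcan}; the corollary is essentially a packaging of that result into the iterative scheme of \autoref{constr}. The only thing worth making explicit is that the base factorization step~\eqref{eq:constr:base} is entirely independent of the coalgebra structure, so that the inductive hypothesis is available from the outset.
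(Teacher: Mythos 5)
Your proof is correct and is essentially the paper's own argument: the paper likewise disposes of the corollary by an easy induction on $k$, with the base step depending only on the pointing $i_C$ and the inductive step supplied by \autoref{P:ptcan}. Nothing is missing.
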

Indeed, this follows by an easy induction from the fact that for a
coalgebra $c\colon C \to FC$ and a subset $s\colon S \subto C$,
$\pasttime s$ may be computed on the canonical graph of $(C,c)$ (see
\autoref{P:ptcan}).
\begin{remark}
  \begin{enumerate}
  \item From \autoref{C:set} we can conclude that for an intersection
    preserving set functor $F$, the reachable part of a given
    $I$-pointed $F$-coalgebra $(C,c,i_C)$ may be computed by a
    standard graph algorithm such as breadth-first-search. We thus
    obtain an efficient and generic algorithm for reachability of
    coalgebras for intersection preserving set functors.
    
  \item Moreover, the $\kth$ subset $C_k \subto C$ contains precisely
    all the states of $C$ that are reachable along a path of length
    precisely $k$ in the canonical graph of $C$ from a state in the
    image $C_0$ of the $I$-pointing $i_C\colon I \to C$.
  \end{enumerate}
\end{remark}

\section{Reachability in a Kleisli Category}
\label{S:kleisli}
In this section, we present a reduction from the reachability
construction in a Kleisli category for a monad on $\C$ to the
reachability construction in the base category $\C$. This makes our
construction applicable in Kleisli categories that fail to have an
$(\E,\M)$-factorization system for the desired class $\M$ of
monomorphisms that determines the notion of subcoalgebra. Coalgebras
over Kleisli categories are used to study the trace semantics of
various kinds of state-based systems, see e.g.~Hasuo, Jacobs, and
Sokolova~\cite{HJS07}.

For our reduction, we need that finite coproducts in $\C$ are well
behaved. In fact, recall~\cite{clw93} that a category is called
\emph{extensive} if it has finite coproducts and for every pair $A,B$
of objects the canonical functor $\C/A \times \C/B \to \C/(A+B)$ is an
equivalence of categories. 
\begin{remark}
  We further recall a few properties of extensive categories from~\cite{clw93}.
  \begin{enumerate}
  \item First note that a category with
    finite coproducts is extensive if and only if it has pullbacks
    along coproduct injections and for every
    commutative diagram
    \[
      \begin{tikzcd}
        A_1 \arrow{r}{a_1}
        \arrow{d}[swap]{h_1}
        & A
        \arrow{d}{h}
        & A_2 \arrow{l}[swap]{a_2}
        \arrow{d}{h_2}
        \\
        B
        \arrow{r}{\inl}
        & B+C
        & C
        \arrow{l}[swap]{\inr}
      \end{tikzcd}
    \]
    we have that $A_1 \xrightarrow{a_1} A \xleftarrow{a_2} A_2$ is a coproduct iff
    both squares are pullbacks.
      
  \item In an extensive category $\C$, the coproduct injections are
    monomorphisms, and coproducts are \emph{disjoint}, i.e.~the
    intersection of the subobjects $\inl\colon A\monoto A+B$ and
    $\inr\colon B\monoto A+B$ is $0 \monoto A + B$, where $0$ is the
    initial object.
  \end{enumerate}
\end{remark}
\begin{example}
  Many categories with set-like coproducts are extensive, for example
  the category $\Set$ itself, as well as the categories of partially
  ordered sets, nominal sets, and graphs as well as every category of
  presheaves. In addition, the categories of unary algebras and of
  J\'onsson-Tarski algebras (i.e.~algebras $A$ with one binary
  operation $A \times A \to A$ that is an isomorphism) are
  extensive. More generally, every topos is extensive.
  
  In contrast, the category of monoids is not extensive.
  \smnote{Do not say ``most algebraic categories''; this is an
    unjustified claim as you can quote nothing to substatiate it.}
\end{example}
Recall our terminology and notation for morphisms in a Kleisli
category for a monad from \autoref{R:kleisli}. Furthermore, recall
that a monad $(T, \mu, \eta)$ is called \emph{consistent} if $\eta_X$
is a monomorphism for every object $X$ of
$\C$~\cite[Definition~IV.2]{ablm}. Note that on $\Set$ all but two
monads are consistent. In fact, only the monad $C_1$ mapping all sets
to $1$ and $C_{01}$ mapping non-empty sets to $1$ and the empty set to
itself are inconsistent~(see e.g.~\cite[Lemma~IV.3]{ablm}).

The following terminology is borrowed from functional programming:
\begin{definition}
  A Kleisli morphism $f\colon X\kleislito Y$ is called \emph{pure} if
  $f\colon X\to TY$ factorizes through $\eta_Y\colon Y\to TY$. 

  A coalgebra homomorphism between coalgebras for a functor on
  $\Kl(T)$ which is pure is called \emph{pure coalgebra homomorphism}.
\end{definition}
\begin{remark}\smnote{Technical remark not part of definition!}
  The pure morphisms form a subcategory of $\Kl(T)$, and if $T$ is
  consistent, then this subcategory can be identified with the base
  category $\C$ via the canonical functor
  $J\colon \C \hookrightarrow \Kl(T)$ given by
  \[
     J(f\colon X\to Y) = (\eta_Z\cdot f)\colon X\kleislito Y.
  \]
  Consequently, we write $g\colon X\to Y$ for pure
  morphisms in diagrams in $\Kl(T)$ and omit the explicit application of the
  functor $J$.
\end{remark}
Recall that an \emph{extension} of a functor $F\colon \C\to \C$ to
$\Kl(T)$ is a functor $\bar F\colon \Kl(T)\to \Kl(T)$ such that the
square below commutes:
\[
  \begin{tikzcd}
    \Kl(T)
    \arrow{r}{\bar F}
    & \Kl(T)
    \\
    \C
    \arrow{r}{F}
    \arrow{u}{J}
    & \C
    \arrow{u}[swap]{J}
  \end{tikzcd}
\]
\noindent
If $T$ is consistent, then a functor on $\Kl(T)$ is an extension
iff it preserves pure morphisms.
\begin{example}
  \begin{enumerate}
  \item We have already mentioned that the category $\Rel$ is the Kleisli
    category of the power-set monad $\pow$. 
    Coalgebras over $\Kl(\pow)$ are systems with non-deterministic branching. For
    example, non-deterministic automata with input alphabet $\Sigma$ are
    coalgebras for $FX=1+\Sigma\times X$ (see
    \autoref{E:coalg}\ref{E:coalg:2}).
    
  \item For the finite power-set monad
    $\pow_f X = \{S\subseteq X\mid S\text{ finite}\}$, coalgebras on
    $\Kl(\pow_f)$ are finitely branching non-deterministic systems. For
    example, finitely branching transition systems with label alphabet
    $\Sigma$ are coalgebras for the functor $FX = \Sigma \times X$ on
    $\Kl(\pow_f)$.

  \item Consider the Kleisli category of the monad $\S^{(-)}$ given by a
    semiring $\S$ (see \autoref{E:ass}\ref{E:ass:4}). The $1$-pointed
    coalgebras for the functor $\bar FX = 1 + \Sigma \times X$ are weighted
    automata with the input alphabet $\Sigma$. Indeed, a coalgebra
    structure
    \[
      c\colon C \to \S^{(1 + \Sigma \times C)} \cong \S \times \S^{(\Sigma
        \times C)}
    \]
    assigns to each state $x$ an output value in $\S$, and for every
    $x \in X$ it yields a map $t_x\colon \Sigma \times C \to \S$ where
    $t_x(a,y) = s$ means that there is a transition from $x$ to $y$
    with label $a \in \Sigma$ and weight $s \in \S$.
    
  \item For the distribution monad $\Dist$, coalgebras on $\Kl(\Dist)$ have
    probabilistic branching.\smnote{I recalled the non-finite version
      of $\Dist$ in \autoref{E:ass}.} For example, coalgebras for the functor
    $\bar F X = \Sigma \times X$ are labelled Markov chains. Indeed, a
    coalgebra $c\colon C \to \Dist (\Sigma \times C)$ assigns to a state
    $x$ a distribution over pairs of labels and next states.  
  \end{enumerate}
  \takeout{
  There are many further examples of systems modelled as coalgebras over Kleisli
  categories, for example, conditional
  transition systems, which can be modelled as coalgebras over the Kleisli category
  of the reader monad on partially ordered sets~\cite{lmcsCTS}.
  \twnote{what else? JD: systems with final states using the maybe monad, 
  pushdown automata using the non-deterministic side effect monad (see Silva et al., 
  Generalizing determinization from automata to coalgebras)?}
}
\end{example}
\begin{assumption}
  We assume that $\C$ is an extensive category, that $(T,\mu,\eta)$ is a
  consistent monad on $\C$, and that $\bar F\colon \Kl(T)\to\Kl(T)$ is
  an extension of $F\colon \C\to \C$. Furthermore we consider the class
  $\M$ of all \emph{pure monomorphisms}:
  \[
    \M := \set{m\colon X\to Y\mid \text{$m$ is a monomorphism in $\C$}}.
  \]
\end{assumption}
\begin{remark}
    The notions of subobjects and of subcoalgebras in $\Kl(T)$
    are understood w.r.t.~the class $\M$, i.e.~a subobject is
    represented by a Kleisli morphism $m\colon S \kleislito X$ in $\M$ and a
    subcoalgebra by a coalgebra homomorphism $h\colon (S,s) \to (C,c)$
    carried by a Kleisli morphism in $\M$.
\end{remark}
\begin{notation}
  We write $\Coalg_I^\pure(\bar F)$ for the subcategory of
  $\Coalg_I(\bar F)$ given by pure coalgebra homomorphisms.
\end{notation}
\begin{construction}
  For every $I$-pointed coalgebra $I\overset{i_C}\kleislito
  C\overset{c}\kleislito \bar F C$ in $\Kl(T)$ we form the following $TF +
  T$-coalgebra in $\C$: 
  \jdnote{It is not completely clear what is the carrier from this 
  presentation. I guess it is $C+I$? SM: Reformulated, better now?}
  \[
    C+I \xrightarrow{c+i_C}
    TFC+TC
    \xrightarrow{TF\inl + T\inl}
    (TF+T)(C+I)
  \]
  together with the $I$-pointing $\inr\colon I \to C+I$. This defines
  the object assignment of a functor
  $G\colon \Coalg_I^\pure(\bar F) \to \Coalg_I(TF + T)$ that maps a
  pure coalgebra homomorphism $h\colon (C,c,i_C)\to (D,d,i_D)$ to
  $Gh = h+\id_I$.\footnote{The fact that $\eta$ is monic is used here
    to ensure that we may regard $h$ as a morphism of $\C$.}

  In fact, $Gh$ is a homomorphism of $I$-pointed coalgebras for $TF +
  T$ on $\C$ as shown by the following commutative diagram:
  \[
    \begin{tikzcd}
      I
      \arrow{r}{\inr}
      \arrow{dr}[swap]{\inr}
      & C+I
      \arrow{r}{c+i_C}
      \arrow{d}{h+I}
      & TFC+TC
      \arrow{d}{TFh+Th}
      \arrow{r}{TF\inl+T\inl}
      &[9mm] TF(C+I)+T(C+I)
      \arrow{d}{(TF+T)(h+I)}
      \\
      & D+I
      \arrow{r}{d+i_D}
      & TFD+TD
      \arrow{r}{TF\inl+T\inl}
      & TF(D+I)+T(D+I)
    \end{tikzcd}
  \]
\end{construction}
\begin{proposition}\label{kleisliCoalgExtensive}
  The functor $G$ reflects isomorphisms and it preserves and
  reflects subcoalgebras.
\end{proposition}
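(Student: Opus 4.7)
The overall strategy is to exploit the extensivity of $\C$ to pass between properties of $h$ and properties of $Gh = h + \id_I$. Since $G$ acts as the identity on the $I$-summand, all three assertions reduce to standard facts about coproducts in extensive categories, together with some careful bookkeeping between $\C$ and $\Kl(T)$.

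I would first prove that $G$ preserves subcoalgebras. A pure monomorphism $h$ is by definition a monomorphism $h\colon S\to C$ in $\C$, and I need to show that $h+\id_I\colon S+I\to C+I$ is again a monomorphism in $\C$. This is a standard consequence of extensivity: the coproduct of two monomorphisms is a monomorphism, which follows, for instance, from the equivalence $\C/(C+I)\simeq \C/C\times \C/I$ applied to any parallel pair into $S+I$. Reflection of subcoalgebras is then an equally brief argument: given $f,g\colon X\to S$ with $hf=hg$, one has $(h+\id_I)\cdot \inl\cdot f = (h+\id_I)\cdot \inl\cdot g$, and cancelling the assumed monomorphism $h+\id_I$ and then the coproduct injection $\inl\colon S\to S+I$ (which is monic by extensivity) yields $f=g$.

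The main content lies in the reflection of isomorphisms. Suppose $Gh=h+\id_I$ is an isomorphism of $(TF+T)$-coalgebras; in particular it is an iso in $\C$. Extensivity provides the pullback square
\[
\begin{tikzcd}
C \arrow{r}{h} \arrow{d}[swap]{\inl} \pullbackangle{-45} & D \arrow{d}{\inl}\\
C+I \arrow{r}{h+\id_I} & D+I
\end{tikzcd}
\]
and since pullbacks of isomorphisms are isomorphisms, $h$ itself is an iso in $\C$. It then remains to verify that its inverse $g\colon D\to C$, viewed as a pure Kleisli morphism, is a coalgebra homomorphism in $\Kl(T)$ preserving the $I$-pointing. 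Unfolding Kleisli composition (using purity of $h$ and the fact that $\bar F$ extends $F$), the homomorphism equation for $h$ becomes $TFh\cdot c = d\cdot h$ in $\C$; post-composing with $g$ and then applying $TFg$ gives the symmetric identity $TFg\cdot d = c\cdot g$, so $g$ is a coalgebra homomorphism. An analogous calculation with the pointings yields $Tg\cdot i_D = i_C$.

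The one delicate point, and therefore the main obstacle, is this last translation between $\Kl(T)$ and $\C$: to interpret the statement ``$h$ is a pure coalgebra homomorphism preserving the pointing'' as the base-category equations above, one must carefully unfold Kleisli composition and use the consistency of $T$ (so that pure morphisms can be identified with their underlying morphisms in $\C$). Once this bookkeeping is in place, everything else reduces to extensivity and routine manipulations of coproducts.
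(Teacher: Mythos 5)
Your proof is correct and takes essentially the same route as the paper's: preservation via closure of monomorphisms under coproducts in the extensive category $\C$, reflection of subcoalgebras by cancelling the monomorphism $(h+\id_I)\cdot\inl = \inl\cdot h$, and reflection of isomorphisms via the extensivity pullback of $h+\id_I$ along $\inl\colon D\to D+I$. Your extra verification that the inverse of $h$ is again a pure coalgebra homomorphism is a routine step the paper leaves implicit.
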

\begin{proof}
  Let $h\colon (C,c,i_C)\to (D,d,i_D)$ be a morphism in
  $\Coalg_I^\pure(\bar F)$. If $h$ is in $\M$, then it is a
  monomorphism in $\C$ and, moreover, so is $Gh = h + \id_I$ since
  monomorphisms are closed under coproducts in the extensive category
  $\C$. This shows that $G$ preserves subcoalgebras.

  To see that it reflects them assume that $h + \id_I$ is a
  monomorphism in $\C$. Then we have that $\inl \cdot h = (h + \id_I)\cdot
  \inl$ is monomorphic since $\inl$ is so. Thus $h$ is a monomorphism in $\C$, whence
  it is a pure monomorphism in $\Kl(T)$. 

  We proceed to proving that $G$ reflects isomorphisms. Consider
  $h\colon (C,c,i_C)\to (D,d,i_D)$ in $\Coalg_I^\pure(\bar F)$ such
  that $Gh = h + \id_I$ is an isomorphism is $\C$. By extensivity, we
  have the pullback
  \[
    \begin{tikzcd}
      C
      \arrow{d}[swap]{h}
      \arrow{r}{\inl}
      \pullbackangle{-45}
      & C + I
      \arrow{d}{h + I}
      \\
      D
      \arrow{r}{\inl}
      & D + I
    \end{tikzcd}
  \]
  Thus, $h$ is an isomorphism in $\C$, whence in $\Kl(T)$. 
\end{proof}
\begin{lemma}\label{L:kleisli2}
  \sloppypar
  Suppose that $T$ and $F$ preserve finite intersections, and let
  $h\colon (D,d,i_D)\to G(C,c,i_C)$ be a morphism in
  $\Coalg_I(TF + T)$ carried by a mono\-morphism in $\C$.
  Then there exists a morphism $g\colon (E,e,i_E) \to (C,c,i_C)$
  in $\Coalg_I^\pure(\bar F)$ such that $(D,d,i_D) = G(E, e, i_E)$ and
  $h = Gg$. 
\end{lemma}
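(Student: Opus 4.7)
The plan is to reconstruct $(E, e, i_E)$ and the pure homomorphism $g$ from $(D, d, i_D)$ and $h$ by first using extensivity of $\C$ to decompose $D$ and $d$, and then lifting the relevant morphisms through $TF\inl$ and $T\inl$ by means of the hypothesis that $T$ and $F$ preserve finite intersections.

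First, since $h$ preserves the $I$-pointing and the pointing of $G(C,c,i_C)$ is $\inr\colon I \to C+I$, we have $h \cdot i_D = \inr$. As $h$ is monic and $\C$ is extensive, pulling back $h$ along $\inl_C$ and $\inr_I$ yields a coproduct decomposition $D = E + I$ with $h = g + \id_I$ for a unique $g\colon E \to C$, and with $i_D = \inr$ under this identification; moreover $g$ is monic (because $g + \id_I$ is). Next, extensivity splits the coalgebra structure as $d = [d|_E, d|_I]$, and combining the homomorphism condition with the fact that $(TF+T)(g+\id_I) = TF(g+\id_I) + T(g+\id_I)$ is itself a coproduct of morphisms forces $d|_E = \inl \cdot d_E'$ and $d|_I = \inr \cdot d_I'$, where $d_E'\colon E \to TF(E+I)$ satisfies $TF(g+\id_I) \cdot d_E' = TF\inl \cdot c \cdot g$ and $d_I'\colon I \to T(E+I)$ satisfies $T(g+\id_I) \cdot d_I' = T\inl \cdot i_C$.

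The key observation is that the square
\[
  \begin{tikzcd}
    E \arrow{r}{\inl} \arrow[>->]{d}[swap]{g} & E+I \arrow[>->]{d}{g+\id_I} \\
    C \arrow{r}{\inl} & C+I
  \end{tikzcd}
\]
is a pullback by extensivity, but crucially, since both $\inl_C$ and $g+\id_I$ are monomorphisms into $C+I$, it is in fact an \emph{intersection} of two subobjects of $C+I$. Hence $F$ preserves this pullback by the finite-intersection hypothesis, and so does $T$; consequently the composite $TF$ preserves it as well. Applying the universal property of the resulting $TF$- and $T$-pullbacks to the compatible pairs $(d_E', c \cdot g)$ and $(d_I', i_C)$ produces unique morphisms $e\colon E \to TFE$ and $i_E\colon I \to TE$ satisfying $TF\inl \cdot e = d_E'$, $TFg \cdot e = c \cdot g$, $T\inl \cdot i_E = d_I'$, and $Tg \cdot i_E = i_C$.

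Combining these, the first and third equalities yield $d = (TF\inl + T\inl) \cdot (e + i_E)$, i.e.~$G(E, e, i_E) = (D, d, i_D)$; the second and fourth say exactly that $g$ is a pure coalgebra homomorphism $(E, e, i_E) \to (C, c, i_C)$ in $\Coalg_I^\pure(\bar F)$ preserving pointings; and $Gg = g + \id_I = h$ by construction. The main obstacle is the factorization of $d_E'$ and $d_I'$ through $TF\inl$ and $T\inl$ respectively: without recognising that the pullback along $\inl_C$ is secretly a binary intersection of subobjects (which only works because the monicity of $h$ makes $g+\id_I$ a subobject), one would need $TF$ to preserve arbitrary pullbacks along coproduct injections, a property strictly stronger than preservation of finite intersections in general.
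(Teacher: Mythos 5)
Your proposal is correct and follows essentially the same route as the paper's proof: decompose $D$ as $E+I$ by pulling back $h$ along the coproduct injections (using $h\cdot i_D=\inr$ and monicity of $h$ to identify the second summand with $I$), observe that the resulting pullback square is a binary intersection of subobjects of $C+I$ and hence preserved by $T$ and $F$, and use the universal property of the preserved pullbacks to produce $e$ and $i_E$. The closing remark correctly pinpoints the same key observation the paper relies on, namely that only preservation of finite intersections (not of arbitrary pullbacks along injections) is needed because $h$ is monic.
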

\begin{proof}
  Consider a morphism $h$ in $\Coalg_I(TF+T)$ which is monomorphic in
  $\C$, i.e.~the following diagram commutes
  \begin{equation}\label{eq:h}
    \begin{tikzcd}
      I
      \arrow[>->]{r}{i_D}
      \arrow[>->]{dr}[swap]{\inr}
      & D
      \arrow{rr}{d}
      \arrow[>->]{d}{h}
      &
      &[9mm] TFD+TD
      \arrow[>->]{d}{TFh+Th}
      \\
      & C+I
      \arrow{r}{c+i_C}
      & TFC+TC
      \arrow{r}{TF\inl+T\inl}
      & TF(C+I)+T(C+I)
    \end{tikzcd}
  \end{equation}
  Form the pullbacks of the coproducts injections of $C+I$ along $h$:
  \begin{equation}\label{eq:pbs}
    \begin{tikzcd}
      E
      \arrow[>->]{r}{m}
      \arrow[>->]{d}[swap]{g}
      \pullbackangle{-45}
      & D
      \arrow[>->]{d}{h}
      & I
      \pullbackangle{-135}
      \arrow[>->]{l}[swap]{i_D}
      \arrow[equals]{d}
      \\
      C
      \arrow[>->]{r}{\inl}
      & C+I
      & I 
      \arrow[>->]{l}[swap]{\inr}
    \end{tikzcd}
  \end{equation}
  In order to see that the right-hand square is indeed a pullback,
  suppose we are given morphisms $p\colon X \to D$ and $q\colon X \to
  I$ such that $h \cdot p = \inr \cdot q$. Then we have
  \[
    h \cdot i_D \cdot q = \inr \cdot q = h \cdot p,
  \]
  and therefore we have $i_D \cdot q = p$ since $h$ is
  monomorphic. It follows that $q$ is the unique mediating morphism. 

  Thus, by extensivity, we have $D = E + I$ with coproduct injections $m$
  and $i_D$ and hence, $h = g + \id_I$. 

  Note that the two pullbacks in~\eqref{eq:pbs} are in fact
  intersections. Since $T$ and $F$ preserve finite intersections,
  the middle square in the next diagram is a pullback, and so is the
  right-hand one, by extensivity. By combining~\eqref{eq:h}
  and~\eqref{eq:pbs} we see that the outside of the following diagram
  commutes, and so we obtain the morphism $e\colon E \to TFE$ as indicated:
  \begin{equation}\label{eq:e}
    \begin{tikzcd}
      E
      \arrow[shiftarr={yshift=7mm}]{rrr}{d\cdot m}
      \arrow[>->]{d}[swap]{g}
      \arrow[dashed]{r}{\exists! e}
      & TFE
      \arrow{r}{TFm}
      \arrow[>->]{d}[swap]{TFg}
      \pullbackangle{-45}
      & TFD
      \arrow[>->]{d}[swap]{TFh}
      \arrow{r}{\inl}
      \pullbackangle{-45}
      & TFD+TD
      \arrow[>->]{d}{TFh+Th}
      \\
      C
      \arrow{r}{c}
      & TFC
      \arrow{r}{TF\inl}
      & TF(C+I)
      \arrow{r}{\inl}
      & TF(C+I) + T(C+I)
    \end{tikzcd}
  \end{equation}
  Similarly, we obtain $i_E$:
  \begin{equation}\label{eq:iE}
    \begin{tikzcd}
      I
      \arrow[shiftarr={yshift=7mm}]{rrr}{d\cdot i_D}
      \arrow{dr}[swap]{i_C}
      \arrow[dashed]{r}{\exists! i_E}
      & TE
      \arrow{r}{Tm}
      \arrow[>->]{d}[swap]{Tg}
      \pullbackangle{-45}
      & TD
      \arrow[>->]{d}[swap]{Th}
      \arrow{r}{\inr}
      \pullbackangle{-45}
      & TFD+TD
      \arrow[>->]{d}{TFh+Th}
      \\
      & TC
      \arrow{r}{T\inl}
      & T(C+I)
      \arrow{r}{\inr}
      & TF(C+I) + T(C+I)
    \end{tikzcd}
  \end{equation}
  Thus, we have seen that $g\colon (E,e,i_E)\to (C,c,i_C)$ is a
  morphism in $\Coalg_I^\pure(\bar F)$, i.e.~the diagram below
  commutes in $\Kl(T)$:
  \[
    \begin{tikzcd}
      I
      \arrow[kleisli]{r}{i_E}
      \arrow[kleisli]{dr}[swap]{i_C}
      & E
      \arrow[kleisli]{r}{e}
      \arrow{d}{g}
      & \bar F E
      \arrow{d}{\bar F g}
      \\
      & C
      \arrow[kleisli]{r}{c}
      & \bar F C
    \end{tikzcd}
  \]
  \sloppypar\noindent
  Finally, we establish that $G(E,e,i_E) = (D, d, i_D)$ by showing
  that the isomorphism $[m,i_D]$ (in $\C$) is a homomorphism from $G(E,e,i_E)$ to 
  $(D,d,i_D)$ in $\Coalg_I(TF+T)$:\smnote{Do not shorten
    ``homomorphism'' and ``from'' to get rid of sloppypar!}
  \[
    \begin{tikzcd}
      I
      \arrow{dr}[swap]{i_D}
      \arrow{r}{\inr}
      & E+I
      \arrow{r}{e+i_E}
      \arrow{d}{[m,i_D]}
      & TFE+TE
      \arrow{r}{TF\inl+T\inl}
      \arrow{dr}[description]{TFm+Tm}
      &[9mm] TF(E+I)+T(E+I)
      \arrow{d}{TF[m,i_D]+T[m,i_D]}
      \\
      & D
      \arrow{rr}{d}
      &
      & TFD+TD
    \end{tikzcd}
  \]
  Indeed, the two triangles trivially commute, and for the middle part
  consider the coproduct components separately: in fact, the left- and
  right-hand components are the upper parts of~\eqref{eq:e}
  and~\eqref{eq:iE}, respectively. This completes the proof. 
\end{proof}
\begin{corollary}\label{C:kreach}
  Let $T$ and $F$ preserve finite intersections.
  \begin{enumerate}
  \item The functor $G$ preserves and reflects reachable coalgebras. That is, 
    $G(C,c,i_C)$ is reachable iff so is $(C,c,i_C)$.
  \item The reachable part of an $I$-pointed $\bar F$-coalgebra $(C,c,i_C)$ is
    (up to isomorphism) given by the reachable part of $G(C,c,i_C)$.
    \jdnote{At this point, it is not clear what you mean here since both 
    coalgebras have different carriers. Only what is written after the 
    next remark makes it clearer. SM: I move that text up, ok now?}
  \end{enumerate}
\end{corollary}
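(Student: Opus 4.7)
The strategy is to transfer reachability back and forth between the pointed coalgebra $(C,c,i_C)$ in $\Coalg_I^\pure(\bar F)$ and its image $G(C,c,i_C)$ in $\Coalg_I(TF+T)$, relying on the fact that Proposition~\ref{kleisliCoalgExtensive} says $G$ preserves and reflects both subcoalgebras and isomorphisms, and that Lemma~\ref{L:kleisli2} ensures every subcoalgebra of $G(C,c,i_C)$ is, up to identification of carriers, in the image of $G$.

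For part~(1), I would treat the two directions separately. For preservation, assume $(C,c,i_C)$ is reachable and let $h\colon (D,d,i_D)\monoto G(C,c,i_C)$ be a subcoalgebra in $\Coalg_I(TF+T)$. Lemma~\ref{L:kleisli2} produces $g\colon (E,e,i_E)\to(C,c,i_C)$ in $\Coalg_I^\pure(\bar F)$ with $Gg=h$ and $G(E,e,i_E)=(D,d,i_D)$. Since $G$ reflects subcoalgebras, $g$ itself represents a subcoalgebra, which reachability of $(C,c,i_C)$ forces to be an isomorphism, and the functoriality of $G$ then yields that $h=Gg$ is an isomorphism. Conversely, assuming $G(C,c,i_C)$ reachable, every pure subcoalgebra $g\colon(E,e,i_E)\monoto(C,c,i_C)$ is sent by $G$ (which preserves subcoalgebras) to a subcoalgebra $Gg$ of $G(C,c,i_C)$; reachability forces $Gg$ to be an isomorphism, and since $G$ reflects isomorphisms, so is $g$.

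For part~(2), I would first build the reachable part of $G(C,c,i_C)$ inside $\C$ by applying Construction~\ref{constr} and Theorem~\ref{constrCorrect}, obtaining a reachable subcoalgebra $\bar h\colon(\bar R,\bar r,\bar\imath)\monoto G(C,c,i_C)$. Lemma~\ref{L:kleisli2} applied to $\bar h$ then produces $g\colon(R,r,i_R)\to(C,c,i_C)$ in $\Coalg_I^\pure(\bar F)$ with $Gg=\bar h$ and $G(R,r,i_R)=(\bar R,\bar r,\bar\imath)$. Since $G$ reflects subcoalgebras, $g$ represents a subcoalgebra of $(C,c,i_C)$, and since $G$ reflects reachability by part~(1), $(R,r,i_R)$ is itself reachable, so it is the desired reachable part in $\Kl(T)$.

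The main technical obstacle will be verifying that Construction~\ref{constr} actually applies to $G(C,c,i_C)$ in the base category $\C$ under the hypothesis that $T$ and $F$ preserve only \emph{finite} intersections. This requires showing that $TF+T$ has least bounds with respect to the relevant class of monomorphisms in $\C$; for this I would combine Proposition~\ref{P:comp} (closure of least-bounds functors under composition) with the observation that in an extensive category coproducts interact well with pullbacks along monomorphisms, so that $TF+T$ inherits the least-bounds property from $T$ and $F$. The remaining pieces, namely the correctness of the lifting via $G$, then follow routinely from the two key lemmas above.
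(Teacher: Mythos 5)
Your core arguments are exactly the paper's: part~(1) is the same two-directional transfer using \autoref{kleisliCoalgExtensive} (preservation/reflection of subcoalgebras, reflection of isomorphisms) together with \autoref{L:kleisli2} to pull a subcoalgebra of $G(C,c,i_C)$ back along $G$, and part~(2) is the same identification of the reachable part via $G$ plus uniqueness of reachable subcoalgebras.

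The only point of divergence is your final paragraph, and there the proposed argument does not go through. By \autoref{P:leastfact}, having least bounds is \emph{equivalent} to preserving all (possibly infinite) intersections, which is strictly stronger than preserving finite intersections; so the hypothesis that $T$ and $F$ preserve finite intersections cannot, via \autoref{P:comp} and extensivity alone, yield least bounds for $TF+T$, and \autoref{constr} is not automatically applicable to $G(C,c,i_C)$. Fortunately this is not needed for the corollary as stated: part~(2) asserts only that the reachable part of $(C,c,i_C)$, which exists and is unique, coincides with that of $G(C,c,i_C)$ --- it does not itself construct the latter. The paper keeps these concerns separate: the recipe following the corollary invokes \autoref{constr} on $G(C,c,i_C)$ only under the additional proviso that \autoref{ass:main:F} holds for $TF+T$ on $\C$ (or, over $\Set$, one can appeal to finitarity and \autoref{cor:fin}). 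If you restrict your part~(2) to the transfer statement and drop the claim that finite-intersection preservation suffices to run \autoref{constr} in $\C$, your proof is complete and matches the paper's.
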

\noindent
It follows that in order to construct the reachable part of an
$I$-pointed $\bar F$-coalgebra $(C,c,i_C)$ one may proceed as follows:
\begin{enumerate}
  \item Construct the reachable part of $G(C,c,i_C)$ in $\C$, and call the carrier $D$.
  \item Then $D = E+I$ for some subobject $m\colon E \monoto C$.
  \item $E$ carries an $I$-pointing $i_E\colon I \kleislito E$ and a
    $\bar F$-coalgebra structure $e\colon E \kleislito \bar F E $ such
    that $m\colon (E,e,i_E) \monoto (C,c,i_C)$ is the reachable part.
\end{enumerate}
Note that if $\Kl(T)$ and $\bar F$ fulfill \autoref{ass:main:F}, then
this gives the same result as performing \autoref{constr} directly on
$(C,c,i_C)$ in $\Kl(T)$ because the reachable
part of a coalgebra is unique up to isomorphism.
\begin{proof}
 \begin{enumerate}
 \item\label{C:kreach:1} For reflection, let $G(C,c,i_C)$ be reachable. By
   \autoref{kleisliCoalgExtensive}, every subcoalgebra
   $
     m\colon(S,s, i_S) \rightarrowtail (C,c,i_C)
   $
   is preserved by $G$, thus $Gm$ is an isomorphism, whence $m$ is one.
    
   For preservation, consider a subcoalgebra
   $m\colon (D,d,i_D)\monoto G(C,c,i_C)$. By \autoref{L:kleisli2},
   there exists $m'\colon (E,e,i_E) \to (C,c,i_C)$ in
   $\Coalg_I^\pure(\bar F)$ such that $m = Gm'$. Since $G$ reflects
   subcoalgebras by \autoref{kleisliCoalgExtensive}, $m'$ is a
   subcoalgebra. Finally, since $(C,c,i_C)$ is reachable, $m'$ is an
   isomorphism, thus so is $m$.
 \item This follows from point \ref{C:kreach:1} noting that
   $(C,c,i_C)$ has a unique reachable subcoalgebra since $G$ reflects
   isomorphisms by \autoref{kleisliCoalgExtensive}.\qedhere
%
  \end{enumerate}
\end{proof}
\begin{remark}
  Observe that in the case where the base category is $\Set$ one may
  drop the assumption that $T$ and $F$ preserve finite
  intersections. Indeed, if $I$ is the empty set, then the reachable
  part of every coalgebra is the empty subcoalgebra (in both $\C$ and
  $\Kl(T)$), and so the statement is trivial
  (cf.~\autoref{R:triv}). And if $I$ is non-empty, then the
  intersections computed in the above proof are non-empty and thus
  preserved by every $T$ and $F$, see Trnkov\'a~\cite{trnkova69}.
\end{remark}
%

\section{Conclusions and Future Work}

We have presented a new iterative construction of the reachable part
of a given $I$-pointed coalgebra. Our construction works for coalgebras for
intersection-preserving endofunctors over a category $\C$ which has
coproducts and a factorization system $(\E, \M)$ where $\M$ consists
of monomorphisms.
For coalgebras over $\Set$ we
saw that their reachable part can be constructed by running the 
standard breadth-first search algorithm on their canonical graph.  Finally, we
have considered coalgebras over Kleisli categories for a consistent
finite-intersection preserving monad $T$. We have shown that for a functor
$\bar F$ on $\Kl(T)$ extending a finite-intersection preserving functor
$F$ on $\C$, the reachable part of a given $I$-pointed coalgebra
can be obtained from the reachable part of an $I$-pointed
$(TF + T)$-coalgebra canonically constructed from the given one.

There remain a number of questions for future work. First, it should be
interesting to see whether our results still hold if we drop our assumption that
$\M$ is a class of monomorphisms. Secondly, it seems that the reachability
construction can be further generalized from working with (operators on)
subcoalgebras to working with fibrations, with the subobject fibration yielding
the present level of generality. Finally, we have seen that breadth-first search
is an instance of our reachability construction. A fibrational approach might provide other
breadth-first search based algorithms such as Dijkstra's algorithm for shortest
paths and Prim's algorithm for minimum spanning trees as special instances.

\bibliographystyle{plain}
\bibliography{refs}
\ifdraft{
\newpage
\tableofcontents
\begin{appendix}
  \input{appendix}
\end{appendix}
}{
}
\end{document}

%